\newtheorem{theorem}{Theorem}[section]
\newtheorem{corollary}[theorem]{Corollary}
\newtheorem{lemma}[theorem]{Lemma}
\newtheorem{proposition}[theorem]{Proposition}
\theoremstyle{definition}
\newtheorem{example}[theorem]{Example}
\newtheorem{remark}[theorem]{Remark}
\newcommand{\FF}{\mathbb F}
\DeclareMathOperator{\diag}{diag}
\newenvironment{matriz}[1]{\left[ \begin{array}{#1}}{\end{array} \right]}
\begin{document}

\title{The centralizer of an endomorphism over an arbitrary field}

\author{David Mingueza}
\email{david.mingueza@outlook.es}
\address{Accenture, Passeig Sant Gervasi 51-53, 08022 Barcelona, Spain}

\author{M.~Eul\`{a}lia Montoro}
\email{eula.montoro@ub.edu}
\address{Departamento de Matem\'aticas e Inform\'atica, Universitat de Barcelona, \\ Gran Via de les Corts Catalanes 585,
08007 Barcelona, Spain}

\author{Alicia Roca}
\email{aroca@mat.upv.es}
\address{Departamento de Matem\'{a}tica Aplicada, IMM, Universitat Polit\`ecnica de Val\`{e}ncia,\\ Camino de Vera s/n, 46022 Val\`encia, Spain}


\thanks{Montoro is supported by the Spanish MINECO/FEDER research project MTM 2015-65361-P and MTM2017-90682-REDT, 
Roca is supported by grant MTM2017-83624-P MINECO and MTM2017-90682-REDT. }

\keywords{Centralizer, companion matrices, non separable polynomials, generalized Jordan canonical form, generalized Weyr canonical form.}
\subjclass[2008]{15A03, 15A21, 15A24, 15A27.}


\begin{abstract}


The centralizer of an endomorphism of a finite dimensional vector space is known when the endomorphism is 
nonderogatory or when its minimal polynomial splits over the field. It is also known for the real Jordan canonical form. In this paper we characterize the centralizer of an  endomorphism over an arbitrary field, and compute its dimension. 
The result is obtained via generalized Jordan canonical forms (for separable and non separable minimal polynomials). 
In addition, we also obtain the corresponding generalized Weyr canonical forms and the structure of its  centralizers, which in turn allows
us to compute the determinant of its elements.

\end{abstract}

\maketitle

\section{Introduction}\label{sec:Intro}

The centralizer of an endomorphism has been widely described when the minimal polynomial splits on the underlying field, and different characterizations have been provided depending on the representation of the endomorphism. For the Jordan canonical form, a description of the centralizer can be found
in \cite{Gant1, Goh, Supru68} and for a Weyr canonical form, in \cite{Omeara}. 
For nonderogatory matrices  over arbitrary fields few references exist; a parametrization of the centralizer for companion matrices (i.e,, nonderogatory matrices) with irreducible minimal polynomial is given in \cite{Calde1}.
For derogatory matrices, a description of the centralizer over the real field is provided in \cite{Goh}. 
No general description  can be found  in the literature for the centralizer of derogatory matrices over arbitrary fields.

\medskip

When the minimal polynomial of a matrix has irreducible factors of degree greater 
than 1, it can not be reduced to Jordan or Weyr canonical forms. However, 
the Jordan canonical form admits different generalizations over arbitrary fields (rational canonical forms) depending on whether 
the minimal polynomial is separable (\cite{BrickFill65,Ro64, Ja75, Per70}) or nonseparable (\cite{Ro64,Holtz00,Dalalyan2014}). We will describe 
the centralizer in both cases. The generalized Jordan form is chosen because it allows us to find a parametrization of the centralizer, in a relatively simple way.

\medskip

We  first recall the centralizer of a companion matrix, building block of the  rational canonical forms. Then, we prove some technical lemmas, which solve 
certain matrix equations involving companion matrices. These results allow us  to obtain the centralizer for the generalized Jordan canonical form. Afterwards, 
adapting appropriately the technical lemmas, we  derive the results needed to obtain the centralizer for the separable case.

\medskip

From the generalization of the Jordan canonical form we derive the generalized Weyr canonical form, and obtain the corresponding centralizer (which, as far as we know, cannot be found in the literature).  Out of it, we also obtain an explicit formula for the determinant of the matrices in the centralizer. This fact is important in order to recognize the automorphisms of the centralizer, which is key, for instance, to study the hyperinvariant and characteristic lattices of an endomorphism (see~\cite{Ast1,MMP13,MMR18}). 
We also compute the dimension of the centralizer.

\medskip

The paper is organized as follows: in Section \ref{sec:preli} we recall some definitions, previous results and
the generalized Jordan canonical forms over arbitrary fields.
In Section \ref{sec:Weyr} we obtain the generalized Weyr canonical form.
Section \ref{gJm} is devoted to obtain the centralizer of the generalized Jordan form over arbitrary fields.
In Section \ref{sec:Perfect} we find the centralizer of the generalized Jordan form when the minimal polynomial is separable, for this case is not a particular case of the general one.
In Section \ref{sec:centraWeyr} we obtain the centralizers of matrices in the generalized Weyr canonical form,  compute the determinant of a matrix in the centralizer and, finally,  find the dimension of the centralizer.

\section{Preliminaries}\label{sec:preli}

\medskip
We recall some  definitions and previous results, which will be used throughout the paper.

\medskip

Let $V$ be a finite dimensional vector space over a field $\mathbb{F}$ and  $f:V \rightarrow V$ an
endomorphism. We denote by $A$ the matrix associated to $f$ with respect to a given basis, $p_{A}$ is the characteristic polynomial and 
$m_A$ is the minimal polynomial of $A$.
In what follows we will identify $f$ with $A$. The degree of a polynomial $p$ is written as $\deg(p)$.

\medskip

Given a matrix $A=[a_{i,j}]_{i, j=1, \ldots, n} \in M_{n}(\mathbb{F})$, we denote  
by $A_{\ast j}=\left[\begin{array}{c} a_{1 j}\\ \vdots \\ a_{ n j}\end{array}\right]$ the $j$-th column of $A$ and by  
$A_{i\ast}=\left[\begin{array}{ccc} a_{i1} & \ldots & a_{in} \end{array}\right]$ the $i$-th row of $A$, i.e.,  
$A=[A_{\ast 1 },\ldots,A_{\ast n}]$
and  $A=\left[\begin{array}{c} A_{1 \ast}\\ \vdots \\ A_{ n \ast}\end{array}\right]$.

\medskip

We recall the primary decomposition theorem, which establishes that a matrix $A \in M_{n}(\mathbb{F})$ is similar to a direct 
sum of matrices whose minimal polynomials are powers of distinct irreducible polynomials over $\mathbb{F}$.

\begin{theorem}[\cite{HoffKun71}, see also~\cite{BrickFill65, Per70}]
Let   $m_{A} = p_{1}^{r_{1}} p_{2}^{r_{2}} \ldots p_{l}^{r_{l}}$ be the minimal polynomial of $A\in M_{n}(\mathbb{F})$, 
where $p_i\in \FF[x]$ are distinct monic irreducible polynomials and $r_i\in \mathbb{N}$.
Let  $V_{i} = \ker(p_{i}^{r_{i}}(A)), \ i=1, \ldots, l$. Then,

\begin{enumerate}
\item[(i)] $V=V_{1}\oplus \cdots \oplus V_l$,
\item[(ii)] $V_i$ is invariant for $A$,
\item[(iii)] the minimal polynomial of $A_i=A_{|V_i}$ is $p_i^{r_i}$.
\end{enumerate}
\end{theorem}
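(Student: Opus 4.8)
The plan is to follow the classical argument through a Bézout identity and a complete system of orthogonal idempotents; since the statement is quoted from \cite{HoffKun71}, I only sketch the main steps. First I would set $q_i:=m_A/p_i^{r_i}=\prod_{j\neq i}p_j^{r_j}$ for $i=1,\dots,l$. Because the $p_i$ are distinct monic irreducible polynomials, $q_1,\dots,q_l$ have no common irreducible factor, so $\gcd(q_1,\dots,q_l)=1$ and there exist $g_1,\dots,g_l\in\FF[x]$ with $\sum_{i=1}^{l}g_iq_i=1$. Evaluating at $A$ yields $\sum_{i=1}^{l}E_i=I$, where $E_i:=g_i(A)q_i(A)$.

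Next I would check that $E_1,\dots,E_l$ is a complete system of orthogonal idempotents. For $i\neq j$ the polynomial $q_iq_j$ is a multiple of $m_A$ (each $p_k^{r_k}$ divides at least one of $q_i$, $q_j$, since $k$ cannot equal both $i$ and $j$), hence $E_iE_j=g_i(A)g_j(A)q_i(A)q_j(A)=0$; multiplying $\sum_kE_k=I$ by $E_i$ then gives $E_i^2=E_i$. Put $W_i:=\im E_i$. From $v=\sum_kE_kv$ we get $V=W_1+\dots+W_l$, and orthogonality makes the sum direct: if $\sum_kw_k=0$ with $w_k\in W_k$, applying $E_j$ annihilates every summand but the $j$-th, so $w_j=0$. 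Each $W_i$ is $A$-invariant because $A$ commutes with the polynomial $E_i=g_i(A)q_i(A)$.

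Then I would identify $W_i$ with $V_i=\ker p_i^{r_i}(A)$. Since $p_i^{r_i}q_i=m_A$, we have $p_i^{r_i}(A)E_i=g_i(A)m_A(A)=0$, so $p_i^{r_i}(A)$ annihilates $W_i$ and $W_i\subseteq V_i$. Conversely, if $v\in V_i$ then, as $p_i^{r_i}\mid q_j$ for every $j\neq i$, we obtain $q_j(A)v=0$, hence $E_jv=0$ for $j\neq i$, and therefore $v=\sum_kE_kv=E_iv\in W_i$. Thus $W_i=V_i$, which establishes (i) and (ii).

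Finally, for (iii) write $A_i=A_{|V_i}$. From $p_i^{r_i}(A_i)=0$ it follows that $m_{A_i}\mid p_i^{r_i}$, so $m_{A_i}=p_i^{s_i}$ with $0\le s_i\le r_i$. As $V=\bigoplus_iV_i$ is an $A$-invariant decomposition, the minimal polynomial of $A$ equals $\operatorname{lcm}_i(m_{A_i})=\prod_ip_i^{s_i}$; comparing this with $m_A=\prod_ip_i^{r_i}$ and invoking unique factorization in $\FF[x]$ forces $s_i=r_i$ for every $i$. I expect the only mildly delicate point to be this last step, namely the identity $m_A=\operatorname{lcm}_i(m_{A_i})$ for a block-diagonal operator together with the comparison of factorizations; everything else is routine manipulation of polynomials evaluated at $A$.
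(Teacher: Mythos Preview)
Your proof is correct and follows the classical idempotent argument from \cite{HoffKun71}. Note, however, that the paper does not supply its own proof of this theorem: it is quoted in the preliminaries as a known result, with attribution to \cite{HoffKun71,BrickFill65,Per70}, so there is nothing in the paper to compare against beyond the reference itself. Your write-up is essentially the Hoffman--Kunze proof and is entirely adequate.
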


The centralizer of $A$ over $\mathbb{F}$ is the algebra
$Z(A)=\{ B\in M_{n}(\mathbb{F}): AB = BA\}$. \\
The role of the centralizer is key to analyze important algebraic properties of the endomorphism (\cite{Supru68}). 

 \medskip

The next proposition allows us to reduce the study of the centralizer to the case where the minimal polynomial is of the form $m_{A}=p^{r}$, with $p\in\mathbb{F}[x]$ irreducible.

\begin{proposition} \cite{FillHL77}\label{hinvdecomp}
Let $A$ and $B$ be endomorphisms on finite dimensional vector spaces $V$ and $W$, respectively, over a field $\FF$. The following properties are equivalent:
\begin{enumerate}
\item The minimal polynomials of $A$ and $B$ are relatively prime.
\item
$Z(A \oplus B) = Z(A) \oplus Z(B)$.

 \end{enumerate}
\end{proposition}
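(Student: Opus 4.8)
The plan is to pass to the block form of $A\oplus B$ and to show that the obstruction to $Z(A\oplus B)=Z(A)\oplus Z(B)$ is governed entirely by intertwining maps. Writing an arbitrary endomorphism of $V\oplus W$ as $C=\begin{pmatrix} C_{11} & C_{12}\\ C_{21} & C_{22}\end{pmatrix}$ with $C_{11}\in\End(V)$, $C_{22}\in\End(W)$, $C_{12}\colon W\to V$ and $C_{21}\colon V\to W$, the relation $C(A\oplus B)=(A\oplus B)C$ is equivalent to the four equations $C_{11}A=AC_{11}$, $C_{22}B=BC_{22}$, $AC_{12}=C_{12}B$ and $BC_{21}=C_{21}A$. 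Since $Z(A)\oplus Z(B)$ is precisely the set of such $C$ with $C_{12}=0$ and $C_{21}=0$, and the inclusion $Z(A)\oplus Z(B)\subseteq Z(A\oplus B)$ is immediate from block multiplication, property (2) is equivalent to the assertion that the only solutions of $AX=XB$ with $X\colon W\to V$ and of $BY=YA$ with $Y\colon V\to W$ are $X=0$ and $Y=0$; by symmetry it suffices to treat the equation $AX=XB$.

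For $(1)\Rightarrow(2)$ I would argue as follows. If $AX=XB$, an immediate induction gives $A^{k}X=XB^{k}$ for all $k$, hence $q(A)X=Xq(B)$ for every $q\in\FF[x]$. Assuming $\gcd(m_A,m_B)=1$, B\'ezout's identity yields $u,v\in\FF[x]$ with $um_A+vm_B=1$; evaluating at $A$ and using $m_A(A)=0$ gives $I_V=v(A)m_B(A)$, whence $X=v(A)m_B(A)X=v(A)Xm_B(B)=0$ because $m_B(B)=0$. The symmetric computation (using $u(B)m_A(B)=I_W$) forces $Y=0$, so every $C\in Z(A\oplus B)$ has $C_{12}=C_{21}=0$, i.e.\ (2) holds.

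For $(2)\Rightarrow(1)$ I would argue by contraposition. Suppose $\gcd(m_A,m_B)\neq 1$ and fix a common monic irreducible factor $p\in\FF[x]$. If $X\colon W\to V$ is any nonzero solution of $AX=XB$, then $\begin{pmatrix}0 & X\\ 0 & 0\end{pmatrix}$ lies in $Z(A\oplus B)\setminus\bigl(Z(A)\oplus Z(B)\bigr)$, so it is enough to produce such an $X$, equivalently a nonzero $\FF[x]$-module homomorphism from $W$ to $V$ (with $x$ acting as $B$ on $W$ and as $A$ on $V$). One should resist here the naive attempt of transporting an isomorphism between cyclic submodules isomorphic to $\FF[x]/(p)$ and extending it by zero, which fails because a vector-space complement of an invariant subspace need not be invariant. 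Instead I would use the invariant factor (rational canonical form) decomposition: $W$ has $\FF[x]/(m_B)$ as a direct summand, so, since $p\mid m_B$, composing the projection onto that summand with reduction modulo $p$ gives a surjective $\FF[x]$-module map $\pi\colon W\twoheadrightarrow\FF[x]/(p)$; dually $V$ has $\FF[x]/(m_A)$ as a direct summand, and since $p\mid m_A$ there is an injective $\FF[x]$-module map $\iota\colon\FF[x]/(p)\hookrightarrow V$. Then $X:=\iota\circ\pi$ is a nonzero $\FF[x]$-module homomorphism, which is exactly the relation $AX=XB$, and the contrapositive follows.

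The only genuinely delicate point is the construction in $(2)\Rightarrow(1)$: the off-diagonal commuting map has to be realized as the composite of a surjection of $W$ onto $\FF[x]/(p)$ with an injection of $\FF[x]/(p)$ into $V$, which is exactly what the structure theorem for finitely generated modules over the principal ideal domain $\FF[x]$ supplies. Everything else reduces to a routine block computation together with the B\'ezout argument.
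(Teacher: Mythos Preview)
The paper does not prove this proposition; it is quoted from \cite{FillHL77} as a known result and used only to reduce to the primary case, so there is no ``paper's proof'' to compare against.

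Your argument is correct and self-contained. The block decomposition reducing (2) to the vanishing of all intertwiners $X\colon W\to V$ with $AX=XB$ (and symmetrically) is the standard reformulation. The B\'ezout computation for $(1)\Rightarrow(2)$ is the classical one. For $(2)\Rightarrow(1)$ you correctly identify and sidestep the pitfall of trying to extend an isomorphism of cyclic submodules by zero on a non-invariant complement; your construction of a nonzero $\FF[x]$-module map $W\twoheadrightarrow\FF[x]/(p)\hookrightarrow V$ via the invariant-factor decomposition is valid, since $\FF[x]/(m_B)$ (respectively $\FF[x]/(m_A)$) is a direct $\FF[x]$-summand of $W$ (respectively $V$), the quotient $\FF[x]/(m_B)\to\FF[x]/(p)$ exists because $p\mid m_B$, and the map $\FF[x]/(p)\to\FF[x]/(m_A)$ sending $1$ to the class of $m_A/p$ is an injective module map. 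Nothing is missing.
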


From now on we will assume that the characteristic polynomial of $A$ is $p_A=p^r$ with $p=x^s+c_{s-1}x^{s-1}+ \ldots +c_1x+c_0$ irreducible.
We denote by $C$ the companion matrix of  $p$
\begin{equation} \label{companion}
 C=\left[\begin{array}{ccccc}
0 & 0 &\ldots & 0 & -c_0 \\
1 & 0 & \ldots & 0 & -c_1 \\
0 & 1 &\ldots & 0 & -c_2 \\
 \vdots &\vdots & \ddots  & \vdots   & \vdots  \\
0 & 0& \ldots & 1 & -c_{s-1}
\end{array}\right]
\in M_{s}(\mathbb{F}).
\end{equation}

\medskip

Knowing the centralizer of a matrix, we can obtain the centralizer of any other similar one. In order to obtain them, 
it is convenient to describe the centralizer of a canonical form. 

\medskip

One of the most useful canonical forms for the similarity of endomorphisms over a finite dimensional space is the Jordan canonical 
form. It allows us to easily know the determinant, characteristic and minimal polynomials, eigenvalues and eigenvectors and rank of 
the endomorphism, among others. We recall next two generalizations of it over arbitrary and perfect fields, respectively.

\subsection{The generalized Jordan canonical form}

The primary rational canonical form of a matrix over a field under similarity is a generalization of the Jordan canonical form. The name comes 
from the fact that it can be obtained using the operations of a field (rational operations) (see \cite{Ro64}).

\begin{theorem}[Primary rational canonical form or generalized Jordan canonical form, \cite{Ro64,Per70,Ja75,Holtz00}]
Let $p_A=p^r$ with $p=x^s+c_{s-1}x^{s-1}+ \ldots +c_1x+c_0\in\mathbb{F}[x]$ irreducible be the characteristic polynomial 
of $A\in M_{n}(\FF)$. Then, $A$ is similar to
\begin{equation} \label{gJordan}
 G=\diag(G_{1}, G_{2}, \ldots, G_{m}),
 \end{equation}
where
\begin{equation} \label{gJordanblock}
G_{i}=\left[\begin{array}{cccc}
C & 0 & \ldots & 0  \\
E & C & \ldots & 0 \\
 \vdots & \ddots & \ddots  & \vdots  \\
0 & \ldots & E & C
\end{array}\right]
\in M_{s\alpha_{i}}(\mathbb{F}), \quad i=1, \ldots, m,
\end{equation}
$C$ is the companion matrix (\ref{companion}) of $p$, $E$ is the matrix
\begin{equation} \label{E}
E=\left[\begin{array}{cccc}
0 & \ldots & 0 & 1 \\
0 & \ldots & 0 & 0 \\
 \vdots & \ddots &   \vdots &  \vdots  \\
0 & \ldots & 0 & 0 \\
\end{array}\right]
\in M_{s}(\mathbb{F}),
\end{equation}
$\alpha_1\geq \alpha_2\geq \ldots \geq \alpha_m \geq 0$ are integers such that 
$p^{\alpha_i}, \ i=1, \ldots, m$ are the elementary divisors of $G$ and $\sum_{i=1}^{m}\alpha_{i}=r$.

\end{theorem}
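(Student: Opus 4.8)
The plan is to translate the statement into the language of modules over the principal ideal domain $\FF[x]$. Regard $V$ as an $\FF[x]$-module by letting $x$ act as $A$. Since $p_A=p^r$, Cayley--Hamilton gives $p(A)^r=0$, so $V$ is a $p$-primary torsion module, and the elementary divisor form of the structure theorem over $\FF[x]$ (equivalently, the elementary divisor form of the rational canonical form, see \cite{Ro64,Per70}) yields integers $\alpha_1\ge\alpha_2\ge\cdots\ge\alpha_m\ge 1$ together with an isomorphism of $\FF[x]$-modules
\[
V\ \cong\ \bigoplus_{i=1}^{m}\FF[x]/(p^{\alpha_i}),
\]
the $p^{\alpha_i}$ being the elementary divisors. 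Comparing $\FF$-dimensions gives $\sum_{i=1}^{m}s\alpha_i=n=\deg p_A=sr$, whence $\sum_{i=1}^{m}\alpha_i=r$ (one may pad with trailing $\alpha_i=0$ to match the indexing convention $\alpha_m\ge 0$ of the statement). Thus $A$ is similar to $\diag(G_1',\dots,G_m')$, where $G_i'$ is the matrix of ``multiplication by $x$'' on $\FF[x]/(p^{\alpha_i})$ with respect to any $\FF$-basis, and it only remains to choose the basis that puts $G_i'$ in the form (\ref{gJordanblock}).

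Fix $i$, put $\alpha=\alpha_i$, and work in $R=\FF[x]/(p^{\alpha})$, an $\FF$-vector space of dimension $s\alpha$. First I would check that the residues $x^{j}p(x)^{k}$ with $0\le j\le s-1$ and $0\le k\le \alpha-1$ form an $\FF$-basis of $R$: iterating division with remainder by $p$ shows every polynomial of degree $<s\alpha$ is uniquely $\sum_{k=0}^{\alpha-1}q_k(x)\,p(x)^{k}$ with $\deg q_k<s$; equivalently, the passage from $1,x,\dots,x^{s\alpha-1}$ to this set is block-unitriangular. Order this basis into blocks $\beta_k=(p^{k},xp^{k},\dots,x^{s-1}p^{k})$ listed in the order $k=0,1,\dots,\alpha-1$. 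Using $x^{s}=p(x)-c_{s-1}x^{s-1}-\cdots-c_{0}$, one computes inside block $k$ that $x\cdot(x^{j}p^{k})=x^{j+1}p^{k}$ for $j<s-1$ and
\[
x\cdot(x^{s-1}p^{k})= p^{k+1}-c_{s-1}x^{s-1}p^{k}-\cdots-c_{1}xp^{k}-c_{0}p^{k}.
\]
The part involving the $c_j$ is exactly the last column of the companion matrix (\ref{companion}) acting within $\beta_k$, so every diagonal block of $G_i'$ equals $C$. For $k\le\alpha-2$ the extra summand $p^{k+1}$ is the first vector of $\beta_{k+1}$, contributing a single $1$ in the $(1,s)$ position of the block sending $\beta_k$ to $\beta_{k+1}$, i.e.\ the matrix $E$ of (\ref{E}); for $k=\alpha-1$ we have $p^{\alpha}=0$ in $R$, so there is no spillover and the last diagonal block $C$ has nothing below it. Hence the matrix of multiplication by $x$ on $R$ in this basis is precisely $G_i$ as in (\ref{gJordanblock}), and summing over $i$ gives $A\sim\diag(G_1,\dots,G_m)=G$.

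I expect the only delicate point to be bookkeeping rather than substance: verifying the ``$p$-adic'' basis of $R$, and matching the orientation conventions of the companion matrix (\ref{companion}) and of $E$ (\ref{E}) so that the spillover term $p^{k+1}$ lands in the entry prescribed by (\ref{gJordanblock}); both reduce to the single relation $x^{s}=p(x)-c_{s-1}x^{s-1}-\cdots-c_{0}$ together with a careful choice of the order of the basis vectors. The one genuinely nontrivial input, namely the existence (and uniqueness) of the elementary divisor decomposition over the PID $\FF[x]$, is classical and may be quoted.
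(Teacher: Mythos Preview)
Your argument is correct: viewing $V$ as an $\FF[x]$-module, invoking the elementary divisor decomposition over the PID $\FF[x]$, and exhibiting the explicit basis $\{x^{j}p^{k}:0\le j\le s-1,\ 0\le k\le\alpha-1\}$ of each cyclic summand $\FF[x]/(p^{\alpha})$ is a standard and clean route to this canonical form. Your bookkeeping checks out: the monomials $x^{j}p^{k}$ are monic of pairwise distinct degrees $j+sk\in\{0,\dots,s\alpha-1\}$, which gives the unitriangular change of basis you claim; and with the blocks ordered $k=0,1,\dots,\alpha-1$, the relation $x\cdot x^{s-1}p^{k}=p^{k+1}-\sum_{j}c_{j}x^{j}p^{k}$ indeed produces the companion block $C$ on the diagonal and the single spillover entry $E$ in the subdiagonal block position $(k+1,k)$, matching~(\ref{gJordanblock}) exactly.

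There is, however, nothing to compare against in the paper itself: this theorem is stated in Section~\ref{sec:preli} as a preliminary result quoted from \cite{Ro64,Per70,Ja75,Holtz00} and is not proved there. The paper takes it as the starting point for its own contributions (the centralizer computations of Sections~\ref{gJm}--\ref{sec:centraWeyr}), so by supplying a self-contained proof you have gone beyond what the paper does for this particular statement.
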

\medskip

The following remarks aim at summarizing some properties of the generalized Jordan canonical form.

\begin{remark}\label{rem:varios}
\begin{enumerate}
\item
The matrix $G$ in (\ref{gJordan}) can be found in many references in the literature receiving different 
names: ``rational canonical set'' (\cite{Per70}, here the blocks $G_i$ are called ``hypercompanion matrices''),   ``classical canonical form'' (\cite{Ja75}),  ``Jordan normal form for the field $\FF$'' (\cite{Holtz00},  in this paper it has been obtained by a duality method).

We call the matrix $G$ the {\it generalized Jordan form of} $A$ and $\alpha=(\alpha_1, \alpha_2, \ldots, \alpha_m)$ the 
{\it generalized Segre characteristic of} $A$. 
We will refer to a block $G_i$ as a {\it generalized Jordan block}.
Here, each $\alpha_i$ denotes the number of diagonal blocks in the matrix $G_i$. 

When $\deg(p)=1$, the resulting matrix is the {\it Jordan canonical form} (\cite{Jor1870}).

\item The canonical form (\ref{gJordan}) allows the following decomposition
\begin{equation} \label{DN}
G=D+N=\diag(D_1,\ldots, D_m)+\diag(N_1,\ldots, N_m)
\end{equation}
\begin{equation*}
D_i=\left[\begin{array}{cccc}
C & 0 & \ldots & 0  \\
0 & C & \ldots & 0 \\
 \vdots & \ddots & \ddots  & \vdots  \\
0 & \ldots & 0 & C
\end{array}\right], \quad
N_i=
\left[\begin{array}{cccc}
0 & 0 & \ldots & 0  \\
E & 0 & \ldots & 0 \\
 \vdots & \ddots & \ddots  & \vdots  \\
0 & \ldots & E & 0
\end{array}\right]
\end{equation*}
with $N_{i},D_{i}\in M_{s\alpha_{i}}(\FF)$. In general $DN\neq ND$.

\end{enumerate}
\end{remark}

\medskip
\begin{remark}\label{rem:baseJ}

A generalized Jordan basis can be written as ${\mathcal B}=\{v^{(1)}, v^{(2)},\ldots,v^{(m)}\}$ where $v^{(i)}$ is a
{\it generalized Jordan chain.} Each one of them contains $\alpha_i$ {\it partial chains}, that is

$$v^{(1)}=\{\underbrace{w_{1,1}, \ldots, w_{1,s}}_{v^{(1)}_1}, \underbrace{w_{1, s+1}, \ldots, w_{1,2s}}_{v^{(1)}_2},\ldots, \underbrace{w_{1,(\alpha_{1}-1)s+1}, \ldots, w_{1,\alpha_1s}}_{v^{(1)}_{\alpha_1}}\}$$
$$v^{(2)}=\{\underbrace{w_{2,1}, \ldots, w_{2,s}}_{v^{(2)}_1},\ldots, \underbrace{w_{2,(\alpha_{2}-1)s+1}, \ldots, w_{2,\alpha_{2}s}}_{v^{(2)}_{\alpha_2}}\}$$
$$\ldots$$
$$v^{(m)}=\{\underbrace{w_{m,1}, \ldots, w_{m,s}}_{v^{(m)}_1},\ldots, \underbrace{w_{m,(\alpha_{m}-1)s+1}, \ldots, w_{m,\alpha_ms}}_{v^{(m)}_{\alpha_m}}\}$$
such that for $i=1,\ldots,m,$
$$\begin{array}{l}
w_{i,1}\in\ker (p^{\alpha_{i}}(G))\setminus \ker (p^{\alpha_{i}-1}(G)),\\
Gw_{i,j}=w_{i,j+1}, \  j=1, \ldots, s\alpha_i, \quad j\neq ks, \quad k=1, \ldots, \alpha_i,  \\
w_{i,ks+1}= p^{k}(G)w_{i,1}, \quad k=1\ldots, \alpha_i-1. \\
\end{array}$$

\end{remark}
\medskip

\begin{example}\label{example:Jordan}

Let $\alpha=(3,2)$, that is, $G=\diag(G_1, G_2)$ with
$$G_1=\left[\begin{array}{ccc}
C & 0 & 0\\
E & C & 0\\
0& E & C
\end{array}\right]\in M_{3s}(\FF), \quad 
G_2=\left[\begin{array}{cc}
C & 0 \\
E & C 
\end{array}\right]\in M_{2s}(\FF). $$
In this case the minimal polynomial of $G$ is $m_G=p^3$ and $\deg(p)=s$. 
Let ${\mathcal B}=\{v^{(1)}, v^{(2)}\}$ be the generalized Jordan basis. Each Jordan chain $v^{(i)}$
contains $\alpha_i$ partial chains

$$v^{(1)}=\{\underbrace{w_{1,1}, \ldots, w_{1,s}}_{v^{(1)}_1}, \underbrace{w_{1, s+1}, \ldots, w_{1,2s}}_{v^{(1)}_2}, \underbrace{w_{1,2s+1}, \ldots, w_{1,3s}}_{v^{(1)}_{3}}\}$$
$$v^{(2)}=\{\underbrace{w_{2,1}, \ldots, w_{2,s}}_{v^{(2)}_1}, \underbrace{w_{2,s+1}, \ldots, w_{2,2s}}_{v^{(2)}_{2}}\}$$

\end{example}

\subsection{The generalized Jordan canonical form of the first kind}

Concerning the existence of canonical forms of matrices for the similarity equivalence relation, particular attention deserves the case when the polynomial $p$ is separable. In this case, another canonical form can be obtained which allows the so called Jordan-Chevalley decomposition of a matrix (\cite{Per70}).  We recall here the results.

The existence of the Jordan-Chevalley decomposition makes easier the study of certain properties of the endomorphism. For instance, one example is the study of the lattices of its hyperinvariant and characteristic subspaces (see \cite{BrickFill65,MMR18}). In particular, it makes easier the obtention of the centralizer of the endomorphism, as we will see later. 

\begin{theorem}[Generalized Jordan canonical form of the first kind, \cite{HoffKun71,Ro70}] \label{jordan_separable}
Let $p_A=p^r$ with $p=x^s+c_{s-1}x^{s-1}+ \ldots +c_1x+c_0\in\mathbb{F}[x]$ irreducible and separable be the characteristic polynomial 
of $A\in M_{n}(\FF)$. Then, $A$ is similar to a matrix of the form
\begin{equation} \label{gJordan1type}
 G=\diag(G_{1}, G_{2}, \ldots, G_{m}),
 \end{equation}
where
\begin{equation*}
G_{i}=\left[\begin{array}{cccc}
C & 0 & \ldots & 0  \\
I & C & \ldots & 0 \\
 \vdots & \ddots & \ddots  & \vdots  \\
0 & \ldots & I & C
\end{array}\right]
\in M_{s\alpha_{i}}(\mathbb{F}), \quad i=1, \ldots, m,
\end{equation*}
$C$ is the companion matrix (\ref{companion}) of $p$, $I$ is the identity matrix and 
$\alpha_1\geq \alpha_2\geq \ldots \geq \alpha_m\geq 0$ are integers such that 
$p^{\alpha_i}, \ i=1, \ldots, m$ are the elementary divisors of $G$ and $\sum_{i=1}^{m}\alpha_{i}=r$.
\end{theorem}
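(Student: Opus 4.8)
The plan is to derive this from the general (primary rational) canonical form of the previous theorem by exploiting separability of $p$. First I would recall the Jordan--Chevalley decomposition: since $p$ is separable, the semisimple part of $A$ has minimal polynomial $p$ and generates (over $\mathbb{F}$) a field extension $\mathbb{K}=\mathbb{F}[x]/(p)$ of degree $s$; equivalently the subalgebra $\mathbb{F}[C]\cong\mathbb{K}$ is a field. The block $G_i$ in \eqref{gJordanblock} has diagonal blocks $C$ and a single subdiagonal of copies of the rank-one matrix $E$; I want to replace each $E$ by the identity $I$ by a similarity that is block-lower-triangular with blocks in $\mathbb{F}[C]$.

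The key step is therefore purely at the level of a single generalized Jordan block $G_i$ of index $\alpha=\alpha_i$. I would look for an invertible block-diagonal (or block-lower-triangular) matrix $P=\diag(P_1,\dots,P_\alpha)$, with each $P_k\in\mathbb{F}[C]\subseteq M_s(\mathbb{F})$, conjugating $G_i$ to the ``$I$-form''. Since $\mathbb{F}[C]$ is a field and $P_k\in\mathbb{F}[C]$ commutes with $C$, conjugation fixes every diagonal $C$, and the $(k{+}1,k)$ off-diagonal entry $E$ becomes $P_{k+1}^{-1}EP_k$. So it suffices to show: (a) $E$ is conjugate to $I$ by an element of $\mathbb{F}[C]$ — but $E$ has rank $1$ and $I$ has rank $s$, so this is impossible for $s>1$, which tells me the naive block-diagonal conjugation cannot work, and a genuinely different argument is needed.

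The correct route, which I expect to be the main obstacle, is to work with the generalized Jordan chains of Remark~\ref{rem:baseJ} and rechoose the basis. Given a chain $v^{(i)}$ with $w_{i,1}\in\ker p^{\alpha}(G)\setminus\ker p^{\alpha-1}(G)$, the vectors $w_{i,1},Gw_{i,1},\dots,G^{s-1}w_{i,1}$ span a $\mathbb{K}$-line, and the transition from the ``$E$'' chain condition $w_{i,ks+1}=p^k(G)w_{i,1}$ to the ``$I$'' chain condition should be achieved by replacing the generators of the later partial chains with suitable $\mathbb{F}[C]$-combinations of the $G^j w_{i,1}$. Concretely, over the field $\mathbb{K}=\mathbb{F}[C]$ the block $G_i$ is (the $\mathbb{F}$-realization of) a single $\alpha\times\alpha$ Jordan block $J_\alpha(\lambda)$ with a one-dimensional off-diagonal, whereas the ``$I$-form'' is the $\mathbb{K}$-companion-type block with identity off-diagonal; these two are $\mathbb{K}$-similar because both are cyclic (nonderogatory over $\mathbb{K}$) with the same elementary divisor $(x-\lambda)^{\alpha}$ — here separability is exactly what guarantees that $C$, viewed over $\mathbb{K}$, is diagonalizable and hence that the relevant module is a free $\mathbb{K}[x]$-module on one generator. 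Realizing a $\mathbb{K}$-similarity as an $\mathbb{F}$-similarity then gives the desired $P$, and assembling the $G_i$ yields \eqref{gJordan1type}. The invariants $\alpha_1\ge\cdots\ge\alpha_m$ and $\sum\alpha_i=r$ are unchanged since similarity preserves elementary divisors. The delicate point to get right is the bookkeeping that turns the single $\mathbb{K}$-cyclic vector into an honest $\mathbb{F}$-basis compatible with the block sizes $s\alpha_i$, i.e.\ checking that the resulting $P$ is invertible over $\mathbb{F}$; this is where the separability hypothesis must be used, and it is the crux of the argument. Alternatively, one may simply cite \cite{HoffKun71,Ro70} for the existence statement and restrict one's own contribution to the normalization of the off-diagonal blocks, which is the reading most consistent with how the theorem is attributed.
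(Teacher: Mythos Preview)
The paper does not prove this theorem. It is stated in the Preliminaries as a known result with attribution to \cite{HoffKun71,Ro70}, and Remark~\ref{rem:varios1type}(2) explicitly says ``for a proof see \cite{Ro70}''. So your final alternative---simply cite the references for the existence statement---is exactly what the paper does; there is nothing further to compare.

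On the substance of your sketch: the overall strategy (pass to the field $\mathbb{K}=\mathbb{F}[x]/(p)\cong\mathbb{F}[C]$ and argue that both block forms are cyclic with the same elementary divisor) is the standard route and is essentially Robinson's argument. Two small points are worth tightening if you ever want to write this out. First, $\mathbb{F}[C]$ is a field already because $p$ is irreducible; separability is not needed for that. Second, your claim that ``$C$, viewed over $\mathbb{K}$, is diagonalizable'' is not literally correct, since $\mathbb{K}$ need not be a splitting field of $p$; what separability actually buys is that $p'(C)$ is invertible (equivalently, $\gcd(p,p')=1$), which is precisely what makes the Jordan--Chevalley decomposition $G=D+N$ with $DN=ND$ available and lets you verify that the $I$-form block has minimal polynomial $p^{\alpha_i}$ (hence is nonderogatory, hence similar to the $E$-form block). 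With that correction your outline goes through.
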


\begin{remark}\label{rem:varios1type}

\begin{enumerate}
\item This canonical form is known as the generalized Jordan form of {\it the first kind}~(\cite{Dalalyan2014}). A particular case 
of this canonical form is the real Jordan canonical form (\cite{Goh}).
When $\deg(p)=1$, this form also reduces to the Jordan canonical form.

\item When $p$ is separable, the matrix (\ref{gJordan1type}) is obviously similar to the generalized Jordan form  (\ref{gJordan})  
(for a proof see \cite{Ro70}).

\item The canonical form (\ref{gJordan1type}) allows a decomposition analogous to  (\ref{DN}), which is the following 

\begin{equation*}
G=D+N=\diag(D_1,\ldots, D_m)+\diag(N_1,\ldots, N_m)
\end{equation*}
\begin{equation*}
D_i=\left[\begin{array}{cccc}
C & 0 & \ldots & 0  \\
0 & C & \ldots & 0 \\
 \vdots & \ddots & \ddots  & \vdots  \\
0 & \ldots & 0 & C
\end{array}\right], \quad
N_i=
\left[\begin{array}{cccc}
0 & 0 & \ldots & 0  \\
I & 0 & \ldots & 0 \\
 \vdots & \ddots & \ddots  & \vdots  \\
0 & \ldots & I & 0
\end{array}\right]
\end{equation*}
with $N_{i},D_{i}\in M_{s\alpha_{i}}(\FF)$. Now, $DN=ND$, and the decomposition is known as {\it Jordan-Chevalley decomposition}. 
In fact, given $p_A=p^r$, $p$ is separable if and only if $A$ admits Jordan-Chevalley decomposition. See, for instance,~\cite{Ro70}. 
\end{enumerate}
\end{remark}

\section{The generalized Weyr canonical form}\label{sec:Weyr}

A canonical form of an endomorphism under similarity, relevant to theoretical and applied mathematics, is the Weyr canonical form. It has been obtained when the minimal polynomial splits over $\mathbb{F}$ (therefore, it exists over algebraically closed fields). See  \cite{Omeara} for details. 
This section is devoted to obtain a generalization  of the Weyr form over an arbitrary field, which will be called the {\it  generalized Weyr canonical form} (or {\it Weyr primary rational canonical form}).

One important feature leading the present work is that the Weyr canonical form allows to describe the matrices in the centralizer of  an endomorphism  in an upper triangular form (see~\cite{Omeara}).
We will generalize it to arbitrary fields, and will obtain the corresponding upper triangular form of the matrices in the centralizer. We will use this property  to calculate the determinant of the elements of  the centralizer. 

\medskip

The Weyr canonical form can be obtained from the Jordan canonical form reordering appropriately the vectors of a 
Jordan basis. In fact, it is associated to the conjugate partition of the Segre characteristic of the endomorphism (see \cite{Omeara}).
To obtain the generalized Weyr canonical form we use the same sort of transformation,
as we see next.

\medskip

According to Proposition~\ref{hinvdecomp}, we will assume that the minimal polynomial of the matrix $A\in M_n(\FF)$ 
is of the form $m_{A}=p^{r}$, where $p$ is irreducible over $\mathbb{F}$. The generalized Weyr canonical form will also be associated to
the conjugate partition of the generalized Segre characteristic of $A$.

\medskip

Let $\alpha=(\alpha_1, \ldots,\alpha _m)$ be the generalized Segre characteristic of $A$, and ${\mathcal B}=\{v^{(1)}, v^{(2)},\ldots,v^{(m)}\}$
the generalized Jordan basis defined in Remark~\ref{rem:baseJ},
where $v^{(i)}=\{v^{(i)}_{1},\ldots,v^{(i)}_{\alpha_{i}}\}$ and each partial chain $v^{(i)}_{j}$ is composed by a collection of $s$ vectors.
To obtain a generalized Weyr basis from it, we proceed analogously to the obtention of the Weyr basis, but replacing vectors by partial chains. The relations among the partial chains can be sketched as follows 

$$\begin{array}{cccc}
  v^{(1)}_{\alpha_1} & \leftarrow\cdots\leftarrow & v^{(1)}_{1} &  \\
 v^{(2)}_{\alpha_2} &\leftarrow\cdots\leftarrow &v^{(2)}_{1} & \\
\vdots && \vdots &\\
 v^{(m)}_{\alpha_m} & \leftarrow\cdots \leftarrow & v^{(m)}_{1} & 
\end{array}$$
In order to renumber the partial chains according to its absolute position in the basis we need to introduce some notation. 
Let $(\beta_1,\ldots,\beta_h)$ be the different values of the generalized Segre partition and $(n_1,\ldots,n_h)$ its 
frequencies. Let $(\mu_1,\ldots,\mu_h)$ be the cumulative frequencies of $\beta_{i}$ ($\mu_i= \mu_{i-1} + n_i$). 
For $i=1,\ldots,m$, let $\sigma_{i}=\alpha_{1}+\ldots+\alpha_{i}$ 
$(\sigma_{0}=0)$. For $\sigma_{i-1}<j\leq \sigma_{i}$, we define 
$$v_{j}=v_{j-\sigma_{i-1}}^{(i)}.$$ 
Then, the partial chains of the basis can be described as follows

$$ \begin{array}{lccc}
   \left. \begin{array}{ccccccccccc}
 & v_{\sigma_1} & \cdots &&  v_{\sigma_1-\beta_h+1}  & \cdots &   v_{\sigma_1-\beta_{k}+1}  & \cdots & v_{\sigma_1-\beta_2+1}  &\cdots &  v_{\sigma_1-\beta_1+1}  \\
  &\vdots&&&\vdots &&\vdots &&\vdots&&\vdots\\
  &v_{\sigma_{\mu_{1}}} &\cdots &&v_{\sigma_{\mu_{1}}-\beta_h+1} &\cdots &v_{\sigma_{\mu_{1}}-\beta_{k}+1} & \cdots  & v_{\sigma_{\mu_{1}}-\beta_2+1}& \cdots & v_{\sigma_{\mu_{1}}-\beta_1+1} \end{array} \right\}&n_{1}\\
 \left. \begin{array}{cccccccccccccccccc}&&\vdots&   & &  &&& & \vdots &  & &&&&& &\vdots  \end{array}\right. &\vdots  \\
  \left.\begin{array}{ccccccccc}
v_{\sigma_{\mu_{k}-n_{k}+1}}& \cdots &v_{\sigma_{\mu_{k}-n_{k}}-\beta_h+2} & \cdots & v_{\sigma_{\mu_{k}-n_{k}}-\beta_{k}+2}&               &           &          &\\
\vdots&   &\vdots &          &        \vdots   & & & & \\ 
v_{\sigma_{\mu_{k}}}& \cdots &v_{\sigma_{\mu_{k}}-\beta_h+1} & \cdots & v_{\sigma_{\mu_{k}}-\beta_{k}+1} &          &              &            &\end{array}\right \}&n_{k}\\
 \left. \begin{array}{ccccccccccc}&&\vdots&   & &  &&& & \vdots  \end{array}\right.&\vdots\\
\left.\begin{array}{ccccccccc}
v_{\sigma_{\mu_{h}-n_{h}+1}} & \cdots  & v_{\sigma_{\mu_{h}-n_{h}+1}-\beta_h+2}&&&&&&\\
 \vdots &&\vdots &&&&&& \\ 
v_{\sigma_{\mu_{h}}} & \cdots  & v_{\sigma_{\mu_{h}}-\beta_h+1}&&&&&& 
\end{array} \right \}&n_{h}\end{array}$$
Now, taking this basis in vertical order we obtain the generalized Weyr form.
In more detail, if we write the identity matrix as 
$$I_{n}=\left[\begin{array}{cccccc}
I_{*(1)}, \ldots, I_{*(\sigma_1)}, I_{*(\sigma_1+1)}, \ldots, I_{*(\sigma_{\mu_{h}})}
\end{array}\right],$$
where $I_{*(j)}$ denotes a block of $s$ consecutive columns of $I_{n}$ (notice that $\sigma_{\mu_{h}}s=n$), the permutation 
matrix reordering the basis is 
\begin{eqnarray}\label{permutationmatrix} P=\left[
I_{*(\sigma_1)} \ \ldots \ I_{*(\sigma_{\mu_{h}})} \ | \
I_{*(\sigma_1-1)} \ \ldots \ I_{*(\sigma_{\mu_{h}}-1)} \ | \ \cdots  \ | \
I_{*(\sigma_1-\beta_{h}+1)}  \ \ldots \  I_{*(\sigma_{\mu_{h}}-\beta_{h}+1)} \ | \
\right.\nonumber \\
\left.
\hspace{1.25cm}\ldots\ | \ I_{*(\sigma_1-\beta_k)}  \ \ldots \  I_{*(\sigma_{\mu_{k-1}}-\beta_k)} \ | \ \cdots  \ | \ I_{*(\sigma_1-\beta_{k-1}+1)}  \ \ldots \  I_{*(\sigma_{\mu_{k-1}}-\beta_{k-1}+1)} \ | 
\right.\nonumber\\
\left.
\hspace{3cm} \cdots \  |  \ I_{*(\sigma_1-\beta_2)}  \ \ldots  \ I_{*(\sigma_{\mu_{1}}-\beta_2)} \ | \ldots | \ I_{*(\sigma_1-\beta_1+1)} \  \ \ldots  \  \ I_{*(\sigma_{\mu_{1}}-\beta_{1}+1)}
\right] \end{eqnarray}
and the following theorem is obtained.

\begin{theorem}\label{theo:forma de Weyr}
Let $G=\diag(G_1, \ldots, G_m)$ be a generalized Jordan matrix as in (\ref{gJordan}). Let $p_{G}=p^{r}$ be its characteristic polynomial with $p$ irreducible and $\deg{(p)}=s$. 
Let $\alpha=(\alpha_1, \ldots, \alpha_m)$ be the generalized Segre characteristic of $G$ and $\tau=(\tau_1, \ldots, \tau_{\alpha_{1}})$ the conjugate partition of $\alpha$.

Then, $G$ is similar to a matrix
\begin{equation}\label{weyrform}
W=\left[\begin{array}{cccccc}
\vspace{-0.15cm}
W_1&E_2& \ldots & 0 &0\\
0&W_2&\ddots& 0 &0\\
\vdots & \vdots & \ddots & \ddots & \vdots \\
0 & 0 & \ldots &W_{\alpha_{1}-1}&E_{\alpha_{1}}\\
0 & 0 & \ldots & 0 &W_{\alpha_{1}}
\end{array}\right],\end{equation}
where

\begin{equation*}
W_i=\left[\begin{array}{cccc}
C&0& \ldots&0\\
0&C&\ldots& 0\\
\vdots & \vdots & \ddots & \vdots\\
0&0 & \ldots & C\\
\end{array}\right]\in M_{s\tau_i}(\mathbb{F}),\quad
i=1, \ldots \alpha_{1},
 \end{equation*}
\begin{equation*}
E_{i+1}=\left[\begin{array}{cccc}
E&0&\ldots&0\\
0&E&\ldots&0\\
\vdots& \vdots & \ddots& \vdots\\
0 & 0 &\ldots &E\\
0 & 0 &\ldots&0\\
\vdots&\vdots&\vdots&\vdots\\
0 & 0 &\ldots&0\\
\end{array}\right]\in M_{s\tau_i\times s\tau_{i+1}}(\mathbb{F}),\quad i=1, \ldots \alpha_{1}-1, \end{equation*}	
with $E$ defined as in~(\ref{E}).
\end{theorem}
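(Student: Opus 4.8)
The plan is to work throughout with the standard basis of $\FF^{n}$ partitioned into \emph{partial chains}: following the block decomposition $G=\diag(G_{1},\ldots,G_{m})$ of (\ref{gJordan}), split the $s\alpha_{i}$ coordinates carried by the $i$-th block $G_{i}$ into $\alpha_{i}$ consecutive groups of $s$ coordinates, the $j$-th of which we call $v^{(i)}_{j}$, $j=1,\ldots,\alpha_{i}$. With respect to this partition, the decomposition $G=D+N$ of Remark~\ref{rem:varios}(2) reads as follows: $D$ is block diagonal with every diagonal block equal to the companion matrix $C$ (one block per partial chain), while $N$ has, for each $i$ and each $j$ with $1\le j\le\alpha_{i}-1$, a single nonzero $s\times s$ block, equal to $E$, coupling $v^{(i)}_{j+1}$ to $v^{(i)}_{j}$, and no other nonzero block. (This merely restates that inside $G_{i}$ the diagonal blocks are $C$ and the first block subdiagonal consists of $E$'s.)

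First I would describe, in these terms, the reordering effected by the permutation matrix $P$ of (\ref{permutationmatrix}). For $t=1,\ldots,\alpha_{1}$ call \emph{level} $t$ the family of partial chains $\{\,v^{(i)}_{\alpha_{i}-t+1}:1\le i\le m,\ \alpha_{i}\ge t\,\}$, that is, the $t$-th partial chain counted from the end of each generalized Jordan chain long enough to have one; its cardinality is $\tau_{t}=\card\{\,i:\alpha_{i}\ge t\,\}$, the $t$-th part of the conjugate partition $\tau$ of $\alpha$. Now reorder the basis so that level $1$ comes first, then level $2$, and so on up to level $\alpha_{1}$, ordering the partial chains inside a level by increasing $i$ and keeping inside a partial chain the original order of its $s$ vectors. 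Unwinding the definitions of $\sigma_{i},\mu_{k},\beta_{k},n_{k}$, one checks that this is exactly the ordering produced by $P$ in (\ref{permutationmatrix}): its first block of columns $I_{*(\sigma_{1})},\ldots,I_{*(\sigma_{\mu_{h}})}$ is level $1$ (the last partial chain of every generalized Jordan chain), its second block is level $2$, and so on, the step of the staircase being governed by the values $\beta_{k}$. Let $P$ denote this permutation matrix, so that $P^{-1}GP$ is the matrix of $G$ in the reordered basis.

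Then I would compute $P^{-1}GP=P^{-1}DP+P^{-1}NP$ summand by summand, using only that conjugating a block matrix by a block permutation relabels its blocks. Since $P$ permutes the length-$s$ blocks of the block-diagonal matrix $D$, all of whose blocks are $C$, the matrix $P^{-1}DP$ is again block diagonal with every block $C$; grouping the first $\tau_{1}$ of them, then the next $\tau_{2}$, and so on, gives $\diag(W_{1},\ldots,W_{\alpha_{1}})$ with $W_{t}=\diag(C,\ldots,C)\in M_{s\tau_{t}}(\FF)$, which is the diagonal part of $W$ in (\ref{weyrform}). As for $N$, its nonzero block coupling $v^{(i)}_{j+1}$ to $v^{(i)}_{j}$ becomes, after reordering, a coupling of level $t$ to level $t+1$ with $t=\alpha_{i}-j$, the partial chains involved being the $i$-th ones listed in their respective levels. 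Hence $P^{-1}NP$ is block strictly upper triangular for the level grouping, with its nonzero blocks only on the first block superdiagonal; the block in position $(t,t+1)$ is, in $s\times s$ blocks, the $\tau_{t}\times\tau_{t+1}$ array whose $(i,i)$ entry is $E$ for every $i$ with $\alpha_{i}\ge t+1$ (equivalently $i\le\tau_{t+1}$) and whose remaining entries vanish. Since $\tau_{t+1}\le\tau_{t}$, this array is precisely the matrix $E_{t+1}$ of the statement, namely $\tau_{t+1}$ diagonal blocks $E$ stacked above $\tau_{t}-\tau_{t+1}$ zero block rows. Adding the two summands yields $P^{-1}GP=W$.

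The only real obstacle is the combinatorial bookkeeping of the second paragraph: verifying that the permutation written out in (\ref{permutationmatrix}) does realize the ``by level, then by generalized Jordan chain'' ordering, with the correct conjugate-partition sizes $\tau_{t}$. The monotonicity $\alpha_{1}\ge\cdots\ge\alpha_{m}$ is exactly what makes each $E_{t+1}$ acquire its clean shape of $E$-blocks stacked above zero rows. When $\deg(p)=1$ this argument specializes to O'Meara's passage from the Jordan to the Weyr canonical form via the conjugate partition (\cite{Omeara}), with the companion block $C$ in the role of the eigenvalue and $E$ in the role of the off-diagonal $1$; and if one prefers, the decomposition $G=D+N$ may be bypassed entirely by checking the identity $GP=PW$ directly, one column block at a time.
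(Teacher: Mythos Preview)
Your proof is correct and follows essentially the same approach as the paper: both conjugate $G$ by the block permutation matrix $P$ of (\ref{permutationmatrix}) and read off the Weyr form. The paper's own proof is a single sentence stating that $P^{-1}GP=W$ with block sizes governed by $\tau$; you have filled in the verification in detail, using the $D+N$ decomposition of Remark~\ref{rem:varios}(2) as a convenient bookkeeping device to treat the diagonal and superdiagonal parts separately, which is a clean way to carry out exactly what the paper asserts without elaboration.
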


\begin{proof}
Representing $G$ with respect to the basis reordered according to the permutation matrix $P$ described 
in~(\ref{permutationmatrix}), we  obtain the desired result, i.e. $P^{-1}GP=W$. 
Observe that, because of the reordering chosen, the sizes of the resulting diagonal blocks are given by $\tau$, the conjugate partition of $\alpha$.

\end{proof}

\begin{example}

Let $G$ be a generalized Jordan form with generalized Segre characteristic $\alpha=(3,2,2)$ and $${\mathcal B}=\{v^{(1)},v^{(2)},v^{(3)}\}=\{\{v^{(1)}_{1},v^{(1)}_{2},v^{(1)}_{3}\},\{v^{(2)}_{1},v^{(2)}_{2}\},
\{v^{(3)}_{1},v^{(3)}_{2}\}\},$$ the corresponding basis. We can sketch the relations among the partial chains of the basis as  

$$\begin{array}{ccc}
  v^{(1)}_{3} & \leftarrow v^{(1)}_{2} &\leftarrow v^{(1)}_{1}   \\
 
 v^{(2)}_{2} &\leftarrow v^{(2)}_{1} &  \\

 v^{(3)}_{2} & \leftarrow v^{(3)}_{1} & 
\end{array}$$
Notice that $(\beta_{1},\beta_{2})=(3,2),$ $(n_{1},n_{2})=(1,2)$, $(\mu_{1},\mu_{2})=(1,3)$, $(\sigma_{1},\sigma_{2},\sigma_{3})=(3,5,7)$.
For $\sigma_{i-1}<j\leq \sigma_{i}$, we define 
$$v_{j}=v_{j-\sigma_{i-1}}^{(i)},$$ 
then, the partial chains of the basis can be described as follows
$$ \begin{array}{lcc}
   \left. \begin{array}{cccc}
  v_{3} &  v_{2}  &  v_{1}\end{array}   \right\}&n_{1}=1\\
 
  \left.\begin{array}{ccc}
v_{5}&v_{4}& \\
 
v_{7}&v_{6} \end{array}\quad\right \}&n_{2}=2
\end{array}$$
Now, if we take this basis in vertical order 
${\mathcal B}'=\{\{v_3, v_5,v_{7}\}, \{v_{2},v_{4},v_{6}\}, \{v_{1}\}\},$ we obtain the generalized Weyr basis associated to the conjugate 
partition of $\alpha$, $\tau=(3,3,1)$.
The permutation matrix which reorders the basis is 
$$P=\left[\begin{array}{ccccccc}
I_{*(3)} & I_{*(5)}& I_{*(7)}&I_{*(2)} &  I_{*(4)}&I_{*(6)}& I_{*(1)}\end{array}\right],$$
and the resulting matrix is 
$$W=\left[\begin{array}{ccc|ccc|c}
C & &&E & &&\\
 & C & && E && \\
 && C &&& E&\\
 \hline
&  & &C  & &&E \\
& & & &C &&  \\
&&&& &C & \\
\hline
& & & & && C
\end{array}\right]\in M_{7s}(\FF).$$

\end{example}

\begin{remark} The  Weyr characteristic  can be obtained in terms of the kernels of $p^{i}(W)$ (\cite{Omeara}).
Analogously, the generalized Weyr characteristic $\tau$ can be computed as

$$\begin{array}{l}
\tau_{1}= \frac{1}{s}\dim(\ker(p(W)),\\
\\
\tau_{2} = \frac{1}{s}(\dim(\ker(p^2(W))-\dim(\ker(p(W))),\\
          \vdots\\
\tau_{\alpha_1}=\frac{1}{s}(\dim(\ker(p^{\alpha_{1}}(W)) -\dim(\ker(p^{\alpha_{1}-1}(W))). 
\end{array}$$
\end{remark}

\section{The centralizer of a matrix over an arbitrary field}\label{gJm}

The centralizer $Z(A)$ of a matrix $A\in M_{n}(\FF)$ is known when the characteristic polynomial splits over $\FF$ (see~\cite{Supru68}), for nonderogatory matrices (see~\cite{Calde1}) and for $\FF=\mathbb{R}$ (see~\cite{Goh}). 
\medskip

In this section we obtain the centralizer of a matrix in the generalized Jordan form  (\ref{gJordan}), therefore for arbitrary fields.
We can obtain it thanks to the structure of the generalized Jordan form and its behavior face to the matrix multiplication. We achieve the result in three steps: in Subsection \ref{sec:Companion} we recall the centralizer of a companion matrix (i.e., nonderogatory matrix), in 
Subsection \ref{centralizer-Jblock} we obtain the centralizer of a generalized Jordan block, and finally in 
Subsection \ref{The centralizer} we find the centralizer of a generalized Jordan matrix.
To prove our results, we introduce in Subsection \ref{centralizer-Jblock}  some technical 
lemmas concerning the obtention of solutions of certain matrix equations.

\subsection{The centralizer of a companion matrix}\label{sec:Companion}

Let $C\in M_{n}(\mathbb{F})$ be the companion matrix  (\ref{companion}) and $m_C(x)=x^{n}+c_{n-1}x^{n-1}+\ldots +c_1x+c_{0}$ its minimal polynomial.

\medskip

In the following lemma we recall the characterization of the centralizer of a companion matrix (\cite{Calde1}). Next, we give a 
more simple proof than that of~ \cite{Calde1}. Moreover, the technique we use to prove it, is also used later to obtain the 
centralizer of the generalized Jordan form.

\begin{lemma}\label{lemmacentracom}~\cite{Calde1}
The centralizer $Z(C)$ of the companion matrix $C$ is 
\[\{X\in M_{n}(\mathbb{F}): X=\left[\begin{array}{cccc}
v & Cv & \ldots & C^{n-1}v
\end{array}\right] , \quad v\in \FF^n\}.\]
\end{lemma}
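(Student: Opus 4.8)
The plan is to prove both inclusions. One direction is immediate: any matrix of the form $X = [v, Cv, \ldots, C^{n-1}v]$ commutes with $C$. Indeed, note that $CX = [Cv, C^2v, \ldots, C^nv]$, while $XC$ has columns obtained by applying $C$ (as a linear map on column-index space, via the companion structure) --- concretely, $C$ acting on the right shifts columns and folds the last one back using $m_C$. So $XC = [Cv, C^2v, \ldots, C^{n-1}v, -c_0 v - c_1 Cv - \cdots - c_{n-1}C^{n-1}v] = [Cv, \ldots, C^{n-1}v, C^n v]$ by Cayley--Hamilton (since $m_C = p_C$ for a companion matrix), and this equals $CX$. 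Hence every such $X$ lies in $Z(C)$, giving the ``$\supseteq$'' inclusion.

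For the reverse inclusion, suppose $X \in Z(C)$, and let $v = X_{*1}$ be its first column. The key observation is that $e_1, Ce_1, \ldots, C^{n-1}e_1$ is the standard basis (this is exactly why $C$ is nonderogatory: $e_1$ is a cyclic vector, since $C^k e_1 = e_{k+1}$ for $k = 0, \ldots, n-1$). Therefore $X_{*(k+1)} = X C^k e_1 = C^k X e_1 = C^k v$, using that $X$ commutes with $C$ hence with every power of $C$. This shows $X = [v, Cv, \ldots, C^{n-1}v]$, completing the ``$\subseteq$'' inclusion.

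I expect no real obstacle here; the only thing to be careful about is making the column-manipulation in the easy direction precise, i.e.\ verifying $XC = [\,X_{*2}, \ldots, X_{*n}, -c_0 X_{*1} - \cdots - c_{n-1}X_{*n}\,]$ directly from the definition of $C$ in~(\ref{companion}), and then recognizing the last column as $C^n v$ via Cayley--Hamilton. The heart of the matter is simply that $e_1$ is a cyclic vector for the companion matrix, which pins down any centralizing matrix by the image of a single column.
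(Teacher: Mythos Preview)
Your proof is correct and takes essentially the same approach as the paper: both arguments identify the columns of $X$ as $C^{k}X_{*1}$ by exploiting the column structure of the companion matrix. The paper does this by directly equating the columns of $CX$ and $XC$ (obtaining $X_{*j+1}=CX_{*j}$ recursively), whereas you phrase the same computation more conceptually via the cyclic-vector observation $e_{k+1}=C^{k}e_{1}$, so that $X_{*(k+1)}=XC^{k}e_{1}=C^{k}Xe_{1}$; you are also more explicit than the paper about the converse inclusion, correctly invoking Cayley--Hamilton to verify the last column of $XC$.
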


\begin{proof}
From the definition of the centralizer, we have that
\[X\in Z(C) \Leftrightarrow CX=XC.\]
As, 
\[CX=C \ \left[\begin{array}{ccc} X_{\ast 1} & \ldots & X_{\ast n}\end{array}\right] =\left[\begin{array}{ccc} CX_{\ast 1} & \ldots & CX_{\ast n}\end{array}\right],\]
\[XC=\left[\begin{array}{cccc} X_{\ast 2} & \ldots & X_{\ast n} & -c_0X_{\ast 1}- \ldots -c_{n-1}X_{\ast n}\end{array}\right],\]
identifying columns we obtain
\[ \left .\begin{array}{l}
X_{\ast 2}=CX_{\ast 1} \\
X_{\ast 3}=CX_{\ast 2}=C^2X_{\ast 1} \\
\ldots \\
X_{\ast n}=CX_{\ast n-1}=C^{n-1}X_{\ast 1}
\end{array}\right\}\]
therefore
\[CX=XC \Leftrightarrow X=\left[\begin{array}{cccc}
X_{\ast 1} & CX_{\ast 1} & \ldots & C^{n-1}X_{\ast 1}
\end{array}\right].\]
\end{proof}

As a consequence of the above lemma we see that the matrices in the centralizer $Z(C)$ can be parametrized in terms of the elements
of the first column. Next corollary shows another parametrization in terms of the last row.  

\begin{corollary}\label{corolcentracom}
Let $X=[x_{i,j}]\in Z(C)$. Then,
\begin{equation} \label{aux}
x_{n-i,1}=c_{n-i}x_{n,1}+c_{n-i+1}x_{n,2}+\ldots +c_{n-1}x_{n,i}+ x_{n,i+1}, \quad i=1, \ldots, n-1.
\end{equation}
\end{corollary}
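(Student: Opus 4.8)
The idea is to use the identity $CX=XC$ on the \emph{rows} of $X$, in the same spirit as the column analysis that proved Lemma~\ref{lemmacentracom}. Reading off rows, observe from~(\ref{companion}) that the $k$-th row of $C$ equals $[\,0\ \ldots\ 0\ 1\ -c_{k-1}\,]$ for $k=2,\ldots,n$ (with the $1$ in position $k-1$) and equals $[\,0\ \ldots\ 0\ -c_0\,]$ for $k=1$. Hence $(CX)_{k\ast}=C_{k\ast}X=X_{(k-1)\ast}-c_{k-1}X_{n\ast}$ for $k=2,\ldots,n$, while $(XC)_{k\ast}=X_{k\ast}C$. Equating these and solving for $X_{(k-1)\ast}$ gives the recursion
\[
X_{(k-1)\ast}=X_{k\ast}C+c_{k-1}X_{n\ast},\qquad k=2,\ldots,n.
\]

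Next, I would iterate this recursion downward, starting from $k=n$ and going up to $k=n-i+1$. A straightforward induction on $i$ yields, for $i=1,\ldots,n-1$,
\[
X_{(n-i)\ast}=X_{n\ast}\bigl(C^{i}+c_{n-1}C^{i-1}+\cdots+c_{n-i+1}C+c_{n-i}I\bigr).
\]
(The base case $i=1$ is $X_{(n-1)\ast}=X_{n\ast}(C+c_{n-1}I)$, and the inductive step is a single application of the recursion together with the distributive law.)

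Finally, extract the first column of both sides by multiplying on the right by $e_1$. From~(\ref{companion}) one has $C^{j}e_1=e_{j+1}$ for $j=0,1,\ldots,n-1$, so the right-hand side becomes $X_{n\ast}\bigl(e_{i+1}+c_{n-1}e_i+\cdots+c_{n-i+1}e_2+c_{n-i}e_1\bigr)$, whose value is $x_{n,i+1}+c_{n-1}x_{n,i}+\cdots+c_{n-i+1}x_{n,2}+c_{n-i}x_{n,1}$. Since the first entry of $X_{(n-i)\ast}$ is $x_{n-i,1}$, this is exactly~(\ref{aux}). I expect no genuine obstacle here: the only care required is the index bookkeeping in the induction and checking that the recursion is only ever applied in the admissible range (so that $n-i\ge 1$ throughout). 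Alternatively, one can stay on the column side of Lemma~\ref{lemmacentracom}, writing $X_{\ast j}=C^{j-1}X_{\ast 1}$ and computing the last coordinate of $C^{j}X_{\ast 1}$ recursively via the last two rows of $C$; this route is equally elementary and leads to the same identity.
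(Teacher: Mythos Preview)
Your proof is correct and follows essentially the same idea as the paper's: both extract from $CX=XC$ a one-step recursion and iterate it down to the last row. The paper works entrywise, deriving $x_{n-i,j}=c_{n-i}x_{n,j}+x_{n-i+1,j+1}$ from the column identity $X_{\ast(j+1)}=CX_{\ast j}$ of Lemma~\ref{lemmacentracom} and iterating diagonally, whereas you package the same recursion at the level of full rows and then evaluate at $e_1$; the alternative ``column-side'' route you mention at the end is exactly the paper's argument.
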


\begin{proof}
As
 \[X=
 \left[\begin{array}{cccc}
x_{1,1} & x_{1,2} & \ldots  & x_{1,n} \\
x_{2,1} & x_{2,2}  & \ldots  & x_{2,n} \\
& & \ddots & \\
x_{n,1} & x_{n,2}  & \ldots  & x_{n,n} \\
 \end{array}\right]=
 \left[\begin{array}{cccc}
X_{\ast1} & CX_{\ast1}  & \ldots  & C^{n-1}X_{\ast1}
 \end{array}\right],\]
for $j=1, \ldots, n-1$,
\[ \left[\begin{array}{ccccc}
x_{1,j+1} \\ x_{2,j+1} \\ \ldots  \\ x_{n-1,j+1} \\ x_{n,j+1}
 \end{array}\right]=
C \left[\begin{array}{ccccc}
x_{1,j} \\ x_{2,j} \\ \ldots  \\ x_{n-1,j} \\ x_{n,j}
 \end{array}\right]=
 \left[\begin{array}{ccccc}
0 & 0 & \ldots & 0 & -c_0 \\
1 & 0 & \ldots & 0 & -c_1 \\
  & & \ddots &   &      \\
0 & 0 & \ldots & 0 & -c_{n-2} \\
0 & 0 & \ldots & 1 & -c_{n-1}
 \end{array}\right]
 \left[\begin{array}{ccccc}
x_{1,j} \\ x_{2,j} \\ \ldots  \\ x_{n-1,j} \\ x_{n,j}
 \end{array}\right]=
 \left[\begin{array}{c}
-c_0x_{n,j} \\
x_{1,j}-c_1x_{n,j} \\
\ldots \\
x_{n-2,j}-c_{n-2}x_{n,j} \\
x_{n-1,j}-c_{n-1}x_{n,j} \\
 \end{array}\right].\]
It means that for $i=1, \ldots, n, \quad j=1, \ldots, n-1 \quad (x_{0,j}=0)$
\begin{equation} \label{cond3b}
x_{n-i,j}=c_{n-i}x_{n,j}+x_{n-i+1,j+1}.
\end{equation}
Applying (\ref{cond3b}) repeatedly we obtain the conclusion:
\[ x_{n-i,1}=c_{n-i}x_{n,1}+x_{n-i+1,2}=\]
\[=c_{n-i}x_{n,1}+c_{n-i+1}x_{n,2}+ x_{n-i+2,3}=\]
\[\ldots\]
\[=c_{n-i}x_{n,1}+c_{n-i+1}x_{n,2}+\ldots +c_{n-1}x_{n,i}+ x_{n,i+1}.\]
\end{proof}

As a consequence, the following results arise.

\begin{corollary}\label{corodim}\cite{Calde2}
Assume that $C\in M_{n}(\mathbb{F})$ is the companion matrix  (\ref{companion}). Then,  
it is satisfied that
 \begin{enumerate}
 \item $\dim (Z(C))=n.$
  \item Let $X\in Z(C)$, then
  $$ \det(X)=0  \Leftrightarrow X_{\ast 1} = 0\Leftrightarrow X_{n\ast}=0.$$
  \item If the polynomial associated to $C$ is irreducible, then
  $$ \det(X)=0 \Leftrightarrow  X=0.$$
  
\end{enumerate}
\end{corollary}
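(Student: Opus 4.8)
The plan is to deduce all three statements from Lemma \ref{lemmacentracom} and Corollary \ref{corolcentracom}, which together give a complete parametrization of $Z(C)$ by the first column (equivalently, by the last row). For part (1), I would argue that the map $v \mapsto [\,v \ Cv \ \ldots \ C^{n-1}v\,]$ is a linear bijection from $\FF^n$ onto $Z(C)$: it is clearly linear and surjective by Lemma \ref{lemmacentracom}, and it is injective because the first column of the image is exactly $v$. Hence $\dim(Z(C)) = \dim \FF^n = n$.

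For part (2), write $X = [\,v \ Cv \ \ldots \ C^{n-1}v\,] \in Z(C)$ with $v = X_{*1}$. If $v = 0$ then $X = 0$, so $\det(X) = 0$. Conversely, suppose $v \neq 0$; I claim the vectors $v, Cv, \ldots, C^{n-1}v$ are linearly independent, so $X$ is invertible and $\det(X) \neq 0$. Indeed, if they were dependent there would be a nonzero polynomial $q$ of degree $< n$ with $q(C)v = 0$; since $C$ is the companion matrix of a polynomial of degree $n$, its minimal polynomial is $m_C$ of degree $n$, and $C$ is nonderogatory, so the $C$-annihilator of any nonzero vector $v$ has degree $n$ — contradiction. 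Therefore $\det(X) = 0 \iff X_{*1} = 0$. The equivalence with $X_{n*} = 0$ follows from Corollary \ref{corolcentracom}: formula (\ref{aux}) expresses each entry $x_{n-i,1}$ of the first column as a linear combination of the entries $x_{n,1}, \ldots, x_{n,i+1}$ of the last row, so $X_{n*} = 0$ forces $X_{*1} = 0$; conversely if $X_{*1} = 0$ then $X = 0$ and in particular $X_{n*} = 0$. (Alternatively, note that the last row of $C^{k}v$ is the $(k+1)$-st entry of $v$ read off suitably, giving the same conclusion directly.)

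For part (3), assume in addition that the polynomial $p$ associated to $C$ is irreducible. By part (2), $\det(X) = 0 \iff X_{*1} = 0$, and $X_{*1} = 0$ already implies $X = 0$ since $X = [\,X_{*1} \ CX_{*1} \ \ldots \ C^{n-1}X_{*1}\,]$; conversely $X = 0$ trivially has $\det(X) = 0$. Hence $\det(X) = 0 \iff X = 0$. (The irreducibility hypothesis here is not strictly needed for the stated equivalence, but it is the natural setting: when $p$ is irreducible, $\FF[C] \cong \FF[x]/(p)$ is a field, $Z(C) = \FF[C]$, and every nonzero element is a unit, which is really what part (3) records.)

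The only mildly delicate point is the linear-independence argument in part (2): one must invoke that a companion matrix of an $n$-th degree polynomial is nonderogatory, so that no nonzero vector is annihilated by a polynomial of degree less than $n$. Everything else is bookkeeping with the explicit parametrization already established. I would present parts (1)–(3) in that order, since (2) carries the main content and (1) and (3) are short consequences of the parametrization.
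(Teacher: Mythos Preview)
Your argument for part~(2) contains a genuine gap. You claim that because $C$ is nonderogatory, ``the $C$-annihilator of any nonzero vector $v$ has degree $n$.'' This inference is false: nonderogatory means only that the minimal polynomial of $C$ has degree $n$, not that every nonzero vector is cyclic. For a concrete counterexample, let $C$ be the companion matrix of $x^{2}-1$ over $\mathbb{Q}$ and take $v=(1,1)^{T}$; then $Cv=v$, so the annihilator of $v$ is $x-1$, of degree $1$. Here
\[
X=\begin{bmatrix} v & Cv \end{bmatrix}=\begin{bmatrix}1&1\\1&1\end{bmatrix}\in Z(C)
\]
has $\det(X)=0$ while $X_{\ast 1}\neq 0$. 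Thus the implication $X_{\ast 1}\neq 0\Rightarrow\det(X)\neq 0$ fails for general companion matrices, and so does your linear-independence step.

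What repairs the argument is precisely the irreducibility of the polynomial associated to $C$: if $p$ is irreducible, then the annihilator of any vector, being a divisor of $p$, is either $1$ or $p$ itself, so a nonzero $v$ has annihilator of full degree $n$ and your argument goes through. In the paper, $C$ is throughout the companion matrix of the irreducible polynomial $p$ (this is the standing assumption from Section~\ref{sec:preli}), so part~(2) is correct in that context---but your justification must invoke irreducibility, not mere nonderogatoriness. Your closing remark that ``the irreducibility hypothesis here is not strictly needed'' in part~(3) is therefore exactly backwards: irreducibility is precisely what is doing the work, and it is needed already for the determinant equivalence in part~(2). (The paper itself offers no proof, citing \cite{Calde2} and presenting the corollary as a consequence of Lemma~\ref{lemmacentracom} and Corollary~\ref{corolcentracom}; your overall plan of deducing everything from those two results is the intended one, modulo this correction. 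Note that the equivalence $X_{\ast 1}=0\Leftrightarrow X_{n\ast}=0$ is unproblematic, since both conditions are equivalent to $X=0$ via the parametrizations, with no irreducibility needed.)
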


\medskip

\subsection{The centralizer of a generalized Jordan block}\label{centralizer-Jblock}

Let $G$ be a generalized Jordan block as in (\ref{gJordanblock}). Our target now is to obtain the centralizer of $G$.

We introduce the following notation: given a matrix $X=[x_{i,j}]_{1\leq i,j\leq n}\in M_{n}(\mathbb{F})$, we denote by $\tilde{X}\in M_{n}(\mathbb{F})$
 \begin{equation}\label{deftildeX}
 \tilde{X}=\left[\begin{array}{ccccc}
0 & x_{n,1} & x_{n,2} & \ldots  & x_{n,n-1} \\
0 &     0     & x_{n,1} & \ldots  & x_{n,n-2} \\
& & & \ddots & \\
0 & 0 & 0 & \ldots  & x_{n,1} \\
0 & 0 & 0 & \ldots  & 0 \\
 \end{array}\right].\end{equation}

The next lemma is key to compute the centralizer of a generalized Jordan matrix $G$.

\begin{lemma}\label{lemaequationsxtilde}
Let $C\in M_{n}(\mathbb{F})$ be a companion matrix as in (\ref{companion})  and
$E$ a matrix as in (\ref{E}).
Let $X,Y\in Z(C)$ and $T\in M_{n}(\mathbb{F})$. Then
\begin{equation*}\label{eqnN2}
        \left.\begin{array}{c}          
           EX+CT=TC+YE\\
           \end{array}\right.\Leftrightarrow \ X=Y,\quad T=T'+\tilde{X},
\end{equation*}
where $T'\in Z(C)$ and $\tilde{X}$ is defined as in (\ref{deftildeX}).
\end{lemma}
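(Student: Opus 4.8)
The plan is to expand the matrix equation $EX + CT = TC + YE$ column by column, exactly as was done in the proofs of Lemma~\ref{lemmacentracom} and Corollary~\ref{corolcentracom}, and to exploit the fact that both $E$ and $EX$, $YE$ are very sparse matrices. First I would observe that $E$ has a single nonzero entry in position $(1,n)$, so $EX$ is the matrix whose first row equals $X_{n\ast}$ (the last row of $X$) and whose other rows vanish, while $YE$ is the matrix whose last column equals $Y_{\ast 1}$ and whose other columns vanish. Rewriting the equation as $CT - TC = YE - EX$, the right-hand side is therefore a known, highly structured matrix once we know $X$ and $Y$.

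Next I would analyze $CT - TC$ columnwise. Writing $C(T_{\ast j}) - (TC)_{\ast j}$ and using the explicit form of $C$ in~(\ref{companion}), one gets for $j = 1, \ldots, n-1$ that $(TC)_{\ast j} = T_{\ast,j+1}$, so the $j$-th column of $CT - TC$ is $CT_{\ast j} - T_{\ast,j+1}$; the last column involves the coefficients $c_i$. Comparing with $YE - EX$: columns $1, \ldots, n-1$ of $YE - EX$ contain only the contribution of $-EX$, i.e. they are $-x_{n,j}$ in the first entry and $0$ below; the last column is $Y_{\ast 1} - (\text{first entry } x_{n,n}, \text{ rest } 0)$. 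Matching columns $1$ through $n-1$ gives recursions $T_{\ast,j+1} = CT_{\ast j} + EX_{\ast j}$ — more precisely the relation forces $T_{\ast,j+1}$ to equal $CT_{\ast j}$ plus a correction supported in the first coordinate. Iterating this recursion starting from $T_{\ast 1}$, and separating out the part of $T$ coming from a genuine element of $Z(C)$ (namely $T' = [T_{\ast 1}, CT_{\ast 1}, \ldots, C^{n-1}T_{\ast 1}]$), one sees that the remaining piece $T - T'$ is built purely from the entries $x_{n,1}, \ldots, x_{n,n-1}$ arranged in the staircase pattern of~(\ref{deftildeX}); this is precisely $\tilde X$. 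Finally, matching the last column yields the constraint that ties $X$ and $Y$ together, and one checks it is equivalent to $X = Y$ (using Corollary~\ref{corolcentracom}, which expresses the first column of an element of $Z(C)$ in terms of its last row, so that equality of last rows — which is what $x_{n,\ast} = y_{n,\ast}$ amounts to once the bookkeeping is done — forces equality of the whole matrices).

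For the converse I would simply substitute $X = Y$ and $T = T' + \tilde X$ with $T' \in Z(C)$ back into $EX + CT = TC + YE$: the $T'$ part cancels because $CT' = T'C$, and it remains to verify the identity $EX + C\tilde X = \tilde X C + XE$, which is a direct computation using the explicit forms of $C$, $E$ and $\tilde X$ — both sides turn out to be the matrix with first row $X_{n\ast}$ shifted appropriately, matching entry by entry.

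The main obstacle I anticipate is the bookkeeping in the forward direction: correctly tracking how the first-coordinate corrections propagate through the recursion $T_{\ast,j+1} = CT_{\ast j} + (\text{correction})$ and verifying that the accumulated corrections assemble exactly into the matrix $\tilde X$ of~(\ref{deftildeX}), rather than some other staircase matrix. A secondary subtlety is extracting the clean equivalence ``$X = Y$'' from the last-column matching; this is where Corollary~\ref{corolcentracom} is needed, since a priori the equation only constrains certain entries, and one must argue that within $Z(C)$ those entries determine the matrix.
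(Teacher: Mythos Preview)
Your proposal is correct and follows essentially the same approach as the paper: both proceed by column-matching in $EX+CT=TC+YE$, obtain the recursion $T_{\ast,j+1}=CT_{\ast j}+x_{n,j}e_1$, iterate it to split $T=T'+\tilde X$ with $T'=[T_{\ast 1},CT_{\ast 1},\ldots,C^{n-1}T_{\ast 1}]\in Z(C)$, and then use Corollary~\ref{corolcentracom} on the last-column equation to identify $Y_{\ast 1}=X_{\ast 1}$ (hence $X=Y$); the converse in both cases reduces to the direct verification of $EX+C\tilde X=\tilde X C+XE$. One small clarification: what the last-column matching literally produces is $Y_{\ast 1}=$ the vector on the right of~(\ref{aux}), and Corollary~\ref{corolcentracom} says that vector is precisely $X_{\ast 1}$ --- so the conclusion is equality of first columns (and hence of the matrices, since $X,Y\in Z(C)$), not directly equality of last rows as your phrasing suggests.
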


\begin{proof}
By Lemma~\ref{lemmacentracom} we have that
\[X\in Z(C) \Leftrightarrow   X= \left[\begin{array}{cccc} X_{\ast 1} & CX_{\ast 1} & \ldots & C^{n-1}X_{\ast 1} \end{array}\right],\]
\[Y \in Z(C) \Leftrightarrow  Y= \left[\begin{array}{cccc} Y_{\ast 1} & CY_{\ast 1} & \ldots & C^{n-1}Y_{\ast 1}  \end{array}\right].\]
Observe that
\[
EX= \left[\begin{array}{cccc}
x_{n,1} & \ldots & x_{n,n-1} & x_{n,n}
 \\
0 & \ldots & 0 & 0 \\
  & \ddots &   &   \\
0 & \ldots & 0 & 0 \\
 \end{array}\right], \quad
 YE= \left[\begin{array}{cccc}
0 & \ldots & 0 & y_{1,1}
 \\
0 & \ldots & 0 & y_{2,1} \\
  & \ddots &   &   \\
0 & \ldots & 0 & y_{n,1} \\
 \end{array}\right].\]
If we denote by $e_{i}$ the i-th vector of the canonical basis of $\mathbb{F}^n$, then
\[\begin{array}{ll}
EX+CT=& \left[\begin{array}{ccc} x_{n,1}e_1 & \ldots & x_{n,n}e_1 \end{array}\right]+  \left[\begin{array}{ccc} CT_{\ast 1} & \ldots & CT_{\ast n} \end{array}\right],\\
TC+YE=&  \left[\begin{array}{cccc} T_{\ast 2} & \ldots & T_{\ast n} & -c_0T_{\ast 1}- \ldots -c_{n-1}T_{\ast n}+Y_{\ast 1} \end{array}\right].\end{array}\]
Identifying the two expressions we obtain
\begin{equation} \label{X}
 \left.\begin{array}{l}
T_{\ast 2}=x_{n,1}e_1+CT_{\ast 1} \\
T_{\ast 3}=x_{n,2}e_1+CT_{\ast 2}=x_{n,2}e_1+C(x_{n,1}e_1+CT_{\ast 1})=(x_{n,2} +x_{n,1}C)e_1+C^2T_{\ast 1} \\
\ldots \\
T_{\ast n}=x_{n,n-1}e_1+CT_{\ast n-1}=(x_{n,n-1}+x_{n,n-2}C +\ldots +x_{n,1}C^{n-2})e_1+C^{n-1}T_{\ast 1} \\
-c_0T_{\ast 1}- \ldots -c_{n-1}T_{\ast n}+Y_{\ast 1}=x_{n,n}e_1+CT_{\ast n}
\end{array}\right\}
\end{equation}
Hence
\[
 \left.\begin{array}{l}
T_{\ast 2}=x_{n,1}e_1+CT_{\ast 1} \\
T_{\ast 3}=x_{n,2}e_1 +x_{n1}e_2+C^2T_{\ast 1} \\
\dots \\
T_{\ast n}=x_{n,n-1}e_1+x_{n,n-2}e_2 +\ldots +x_{n,1}e_{n-1}+C^{n-1}T_{\ast 1} \\
-c_0T_{\ast 1}- \ldots -c_{n-1}T_{\ast n}+Y_{\ast 1}=x_{n,n}e_1+CT_{\ast n}
\end{array}\right\}\]
and replacing $T_{\ast 2}, \ldots, T_{\ast n}$  into the last equation
\[\begin{array}{l}
c_0T_{\ast 1}+\\
c_1(x_{n,1}e_1+CT_{\ast 1})+\\
c_2(x_{n,2}e_1 +x_{n,1}e_2+C^2T_{\ast 1})+ \\
\ldots \\
c_{n-1}(x_{n,n-1}e_1+x_{n,n-2}e_2 +\ldots +x_{n,1}e_{n-1}+C^{n-1}T_{\ast 1}) + \\
x_{n,n}e_1 + \\
+C(x_{n,n-1}e_1+x_{n,n-2}e_2 +\ldots +x_{n,1}e_{n-1}+C^{n-1}T_{\ast 1})=\end{array}\]

\medskip

\[\begin{array}{l}
=(c_0I_{n}+c_1C+ \ldots +c_{n-1}C^{n-1}+C^n)T_{\ast 1}+\\
c_1x_{n,1}e_1+\\
c_2(x_{n,2}e_1 +x_{n,1}e_2)+ \\
\ldots \\
c_{n-2}(x_{n,n-2}e_1+x_{n,n-3}e_2 +\ldots +x_{n,1}e_{n-2}) + \\
c_{n-1}(x_{n,n-1}e_1+x_{n,n-2}e_2 +\ldots +x_{n,2}e_{n-2}+x_{n,1}e_{n-1}) + \\
x_{n,n}e_1
+x_{n,n-1}e_2+x_{n,n-2}e_3 +\ldots +x_{n,1}e_{n}=\end{array}\]

\begin{equation*} \label{condb}
 =\left[\begin{array}{r}
c_1x_{n,1}+c_2x_{n,2}+ \ldots +c_{n-2}x_{n,n-2}+c_{n-1}x_{n,n-1}+x_{n,n} \\
c_2x_{n,1}+c_3x_{n,2}+ \ldots +c_{n-1}x_{n,n-2}+x_{n,n-1}\\
\ldots \\
c_{n-2}x_{n,1}+c_{n-1}x_{n,2}+x_{n,3}\\
c_{n-1}x_{n,1}+x_{n,2}\\
x_{n,1}
 \end{array}\right]=
 Y_{\ast 1},
\end{equation*}

\noindent
Taking into account Corollary \ref{corolcentracom} we conclude that
\[x_{i,1}=y_{i,1}, \quad i=1, \ldots, n,\]
therefore $X_{\ast 1}=Y_{\ast 1}$, and since $X,Y\in Z(C)$ we obtain that $X=Y$. Moreover, from equations (\ref{X}) denoting
$T= \left[\begin{array}{cccc} T_{\ast 1} & T_{\ast 2} & \ldots & T_{\ast n} \end{array}\right]
$, 
\[T= \left[\begin{array}{ccccc}
T_{\ast 1} & CT_{\ast 1} & C^{2}T_{\ast 1} & \ldots & C^{n-1}T_{\ast 1}
 \end{array}\right]
 + \left[\begin{array}{ccccc}
0 & x_{n,1} & x_{n,2} & \ldots  & x_{n,n-1} \\
0 &     0     & x_{n,1} & \ldots  & x_{n,n-2} \\
& & & \ddots & \\
0 & 0 & 0 & \ldots  & x_{n,1} \\
0 & 0 & 0 & \ldots  & 0 \\
 \end{array}\right],\]
i.e. $T=T'+\tilde{X}$ where $T'\in Z(C)$ and $\tilde{X}$ defined as in (\ref{deftildeX})
as desired.

Conversely, assume that $X, T'\in Z(C)$, $Y=X$ and $T=T'+\tilde{X}$ where $\tilde{X}$ is defined as in (\ref{deftildeX}).
To prove that  
\begin{equation*}\label{eqnN2_converse}    
           EX+CT=TC+YE,
\end{equation*}
it is enough to prove that 
\begin{equation*}\label{eqnN3_converse}       
           EX+C\tilde{X}=\tilde{X}C+XE,\\
\end{equation*}
and this equation is satisfied whenever (\ref{aux}) is satisfied. 
But it fulfills because,
$X\in Z(C)$.


\end{proof}

\begin{corollary}\label{coro:dim}
 The set 
 $${\mathcal M}=\{(X,T)\in M_{n}(\FF)\times M_{n}(\FF): X\in Z(C),\ T=T'+\tilde{X},\ T'\in Z(C)\}$$ is a vector subspace of dimension
 $$\dim({\mathcal M})=2n.$$
\end{corollary}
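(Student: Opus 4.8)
The plan is to exhibit $\mathcal{M}$ as the image of a linear map and then count dimensions directly. First I would observe that $\mathcal{M}$ is the set of pairs $(X, T'+\tilde X)$ where $X$ and $T'$ range independently over $Z(C)$, since the map $X \mapsto \tilde X$ defined by (\ref{deftildeX}) is linear in $X$ (each entry of $\tilde X$ is either $0$ or one of the last-row entries $x_{n,j}$ of $X$, hence a linear functional of $X$). Consequently $\mathcal{M}$ is precisely the image of the linear map
\[
\Phi\colon Z(C)\times Z(C)\longrightarrow M_{n}(\FF)\times M_{n}(\FF),\qquad \Phi(X,T')=(X,\,T'+\tilde X).
\]
Since $Z(C)$ is a vector subspace of $M_n(\FF)$ (it is an algebra, being a centralizer), the domain $Z(C)\times Z(C)$ is a vector space, and therefore $\mathcal{M}=\im\Phi$ is a vector subspace of $M_n(\FF)\times M_n(\FF)$.

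Next I would compute $\dim\mathcal{M}=\dim\im\Phi=\dim\big(Z(C)\times Z(C)\big)-\dim\ker\Phi$. By Corollary~\ref{corodim}(1), $\dim Z(C)=n$, so $\dim\big(Z(C)\times Z(C)\big)=2n$. It then suffices to show $\Phi$ is injective, i.e. $\ker\Phi=0$. Suppose $\Phi(X,T')=(0,0)$. The first coordinate gives $X=0$ immediately, whence $\tilde X=0$, and then the second coordinate gives $T'=T'+\tilde X=0$. Thus $\ker\Phi=0$, so $\dim\mathcal{M}=2n$.

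There is essentially no obstacle here; the only point requiring a moment's care is confirming that $X\mapsto\tilde X$ is genuinely linear, which is immediate from the explicit form (\ref{deftildeX}), and that $Z(C)$ is closed under addition and scalar multiplication, which is clear from the defining condition $CX=XC$ (or from the parametrization in Lemma~\ref{lemmacentracom}). The injectivity argument is what makes the dimension come out to exactly $2n$ rather than something smaller: the presence of the free first coordinate $X$ forces $X=0$ and hence kills the $\tilde X$ term, after which $T'$ is also forced to vanish.

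\begin{proof}
Since the map $X\mapsto\tilde X$ given by (\ref{deftildeX}) expresses each entry of $\tilde X$ as either $0$ or one of the entries $x_{n,1},\ldots,x_{n,n-1}$ of $X$, it is linear. As $Z(C)$ is a vector subspace of $M_n(\FF)$, the map
\[
\Phi\colon Z(C)\times Z(C)\longrightarrow M_{n}(\FF)\times M_{n}(\FF),\qquad \Phi(X,T')=(X,\,T'+\tilde X),
\]
is linear, and by definition $\mathcal{M}=\im\Phi$; hence $\mathcal{M}$ is a vector subspace. If $\Phi(X,T')=(0,0)$, the first coordinate gives $X=0$, so $\tilde X=0$, and the second coordinate gives $T'=0$; thus $\Phi$ is injective. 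By Corollary~\ref{corodim}, $\dim Z(C)=n$, so
\[
\dim(\mathcal{M})=\dim\big(Z(C)\times Z(C)\big)=2n.
\]
\end{proof}
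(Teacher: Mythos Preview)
Your proof is correct. The paper states this corollary without proof, and your argument---parametrizing $\mathcal{M}$ as the image of the injective linear map $(X,T')\mapsto(X,T'+\tilde X)$ from $Z(C)\times Z(C)$ and invoking $\dim Z(C)=n$ from Corollary~\ref{corodim}---is precisely the natural unpacking of why it is labeled a corollary.
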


Notice that since $E\tilde{X}=\tilde{X}E=0$, Lemma~\ref{lemaequationsxtilde} can be stated in a more general form as in the following lemma.

\begin{lemma}\label{lemaequations}
Let $C\in M_{n}(\mathbb{F})$ be a companion matrix as in (\ref{companion}) and
$E$ a matrix as in (\ref{E}).
Let  $X',Y'\in Z(C)$, $T, A \in M_{n}(\mathbb{F})$ and $X=X'+\tilde{A}$,  $Y=Y'+\tilde{A}$ with  $\tilde{A}$ defined as
in (\ref{deftildeX}). Then
\begin{equation*}\label{eqn2}
        \left.\begin{array}{c}
           EX+CT=TC+YE\\
           \end{array}\right. \Leftrightarrow\ X=Y,\quad T=T'+\tilde{X},
\end{equation*}
where $T'\in Z(C)$ and $\tilde{X}$ is as in (\ref{deftildeX}).
\end{lemma}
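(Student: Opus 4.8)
The statement to prove is Lemma~\ref{lemaequations}, which is the ``shifted'' version of Lemma~\ref{lemaequationsxtilde}: we replace the matrices $X,Y\in Z(C)$ of the earlier lemma by $X=X'+\tilde A$ and $Y=Y'+\tilde A$, where $X',Y'\in Z(C)$ and $\tilde A$ has the special upper-triangular shape~\eqref{deftildeX}. The plan is to reduce the equation $EX+CT=TC+YE$ to the one already solved in Lemma~\ref{lemaequationsxtilde} by exploiting the annihilation identities $E\tilde A=\tilde A E=0$, which are visible directly from the shapes of $E$ in~\eqref{E} and of $\tilde A$ in~\eqref{deftildeX} (the single nonzero column of $E$ is the last one, and $\tilde A$ has zero last row and zero first column). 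I would also record the commutation identity needed, namely $C\tilde A-\tilde A C$ can be absorbed: in fact, comparing with the ``converse'' computation inside the proof of Lemma~\ref{lemaequationsxtilde}, one has $C\tilde A-\tilde AC = \widetilde{(\cdot)}$-type corrections that lie in $Z(C)$ plus something of the form $\tilde{(\cdot)}$; more precisely, the key observation is that for $A'\in Z(C)$ the matrix $C\tilde{A'}-\tilde{A'}C + EA' - A'E = 0$ is exactly the identity established at the end of the proof of Lemma~\ref{lemaequationsxtilde}.

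First I would substitute $X=X'+\tilde A$ and $Y=Y'+\tilde A$ into $EX+CT=TC+YE$. Using $E\tilde A=0$ and $\tilde AE=0$, the terms $E\tilde A$ and $\tilde AE$ vanish, so the equation becomes
\begin{equation*}
EX'+CT = TC + Y'E .
\end{equation*}
This is precisely the equation treated in Lemma~\ref{lemaequationsxtilde} with the pair $(X',Y')\in Z(C)\times Z(C)$ in place of $(X,Y)$. Applying that lemma gives $X'=Y'$ and $T=T'+\widetilde{X'}$ with $T'\in Z(C)$. From $X'=Y'$ we get $X=X'+\tilde A=Y'+\tilde A=Y$, which is the first assertion. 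For the second assertion I must identify $\widetilde{X'}$ with $\tilde X$: but $\tilde X$ depends only on the last row of $X$, and since $X=X'+\tilde A$ while $\tilde A$ has zero last row, $X$ and $X'$ have the same last row, hence $\tilde X=\widetilde{X'}$. Therefore $T=T'+\tilde X$ with $T'\in Z(C)$, as claimed.

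For the converse, assume $X=Y$ (equivalently $X'=Y'$ once we note that $X-\tilde A=X'$ and $Y-\tilde A=Y'$) and $T=T'+\tilde X$ with $T'\in Z(C)$. Again $\tilde X=\widetilde{X'}$ since $X$ and $X'$ share a last row. Then $EX+CT=TC+YE$ splits, using $E\tilde A=\tilde A E=0$ and $T'\in Z(C)$, into the requirement $EX'+C\widetilde{X'}=\widetilde{X'}C+X'E$, which is exactly the identity~\eqref{aux}-type relation already verified in the converse part of Lemma~\ref{lemaequationsxtilde} (valid because $X'\in Z(C)$). Hence the equation holds.

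The only genuinely delicate point — and the one I would state carefully rather than wave at — is the pair of annihilation identities $E\tilde A=\tilde A E=0$ together with the fact that $X$ and $X'=X-\tilde A$ have the same last row (so that $\tilde X$ is unambiguous and Lemma~\ref{lemaequationsxtilde} applies verbatim). Everything else is a direct substitution into the already-proved lemma, so no new computation is needed; the proof is essentially three lines once those shape facts are in hand. I expect no real obstacle beyond making the bookkeeping of ``$X$ versus $X'$'' transparent to the reader.
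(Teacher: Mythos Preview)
Your proposal is correct and follows exactly the paper's approach: the paper introduces Lemma~\ref{lemaequations} with the sentence ``Notice that since $E\tilde{X}=\tilde{X}E=0$, Lemma~\ref{lemaequationsxtilde} can be stated in a more general form as in the following lemma'' and gives no further proof. Your write-up simply unpacks that sentence, and even makes explicit the point that $\tilde X=\widetilde{X'}$ because $\tilde A$ has zero last row, a detail the paper leaves to the reader.
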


Particular cases of the previous results are stated in the next corollary; they will be used later.

\begin{corollary}\label{coreq}

\begin{enumerate}
\item Let $Y\in Z(C)$ and $T\in M_{n}(\mathbb{F})$. Then
\begin{equation*}\label{eqn1}
        \left.\begin{array}{c}
           
           CT=TC+YE\\
	\end{array}\right. \Leftrightarrow \ Y=0,\ T\in Z(C).
\end{equation*}

\item Let $X\in Z(C)$ and $T\in M_{n}(\mathbb{F})$. Then
\begin{equation*}\label{eqn12}
        \left.\begin{array}{c}
           TC=CT+EX\\
         \end{array}\right. \Leftrightarrow\ X=0,\ T\in Z(C).
\end{equation*}
\end{enumerate}
\end{corollary}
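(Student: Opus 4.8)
The plan is to obtain both equivalences as immediate specializations of Lemma~\ref{lemaequationsxtilde} (equivalently, of its reformulation Lemma~\ref{lemaequations}), by setting one of the two companion‑centralizer matrices appearing there equal to zero. No new computation is needed: the whole content lies in recognizing the correct substitution, and the fact that $0\in Z(C)$ with $\widetilde{0}=0$.

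For part (1), I would rewrite the equation $CT=TC+YE$ as $EX+CT=TC+YE$ with $X=0$. Since $0\in Z(C)$ and $Y\in Z(C)$ by hypothesis, Lemma~\ref{lemaequationsxtilde} applies and gives $X=Y$ and $T=T'+\tilde X$ with $T'\in Z(C)$. Because $X=0$, this forces $Y=0$ and $\tilde X=0$, hence $T=T'\in Z(C)$, which is the forward implication. For the converse, if $Y=0$ and $T\in Z(C)$ then $CT=TC=TC+YE$, so the equation holds.

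For part (2), symmetrically, I would rewrite $TC=CT+EX$ as $EX+CT=TC+YE$ with $Y=0$. Now $X\in Z(C)$ and $0\in Z(C)$, so Lemma~\ref{lemaequationsxtilde} yields $X=Y$ and $T=T'+\tilde X$ with $T'\in Z(C)$; since $Y=0$ we get $X=0$, hence $\tilde X=0$ and $T=T'\in Z(C)$. The converse is again immediate: $X=0$ and $T\in Z(C)$ give $TC=CT=CT+EX$.

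Since the argument is pure bookkeeping, there is no real obstacle; the only point deserving a line of justification is that plugging $X=0$ (resp.\ $Y=0$) into Lemma~\ref{lemaequationsxtilde} is legitimate, which holds because the zero matrix lies in $Z(C)$ and $\widetilde{0}=0$, so the hypotheses of the lemma are met and the conclusion specializes cleanly. The direct route via Lemma~\ref{lemaequationsxtilde} is preferable to trying to deduce (1) and (2) from one another, since transposing the matrix equation does not preserve the shape of $C$ or of $E$.
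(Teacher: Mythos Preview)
Your proof is correct and matches the paper's intent: the corollary is stated without its own proof, introduced as ``particular cases of the previous results,'' i.e., of Lemma~\ref{lemaequationsxtilde} (and Lemma~\ref{lemaequations}). Your specialization $X=0$ for part~(1) and $Y=0$ for part~(2), together with the observation that $0\in Z(C)$ and $\widetilde{0}=0$, is exactly the intended reading.
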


The next theorem is the main result of this subsection. We give a characterization of   the centralizer of a generalized Jordan block.

\begin{theorem}[Centralizer of a generalized Jordan block]\label{corocentraGJ}

Let $G\in M_{s\ell}(\mathbb{F})$ be a generalized Jordan block, $m_{G}=p^{\ell}$, $p$ irreducible and $\deg(p)=s$. Then, 
the centralizer $Z(G)$ of $G$ is

\begin{equation*}\label{centralizer}
\left\{\left[\begin{array}{ccccc}
X_{1,1} & 0 & \ldots &  0  & 0 \\
X_{2,1}&X_{1,1}&\ldots& 0 & 0 \\
\vdots&\ddots & \ddots& \vdots & \vdots \\
X_{\ell-1,1}&X_{\ell-2,1}&\ddots&X_{1,1}&0\\
X_{\ell,1}&X_{\ell-1,1}&\dots&X_{2,1}&X_{1,1}
\end{array}\right]
, \begin{array}{lc}  X_{1,1}\in Z(C),\\
    \vspace{0.3mm}
    X_{i,1}=X_{i,1}'+\tilde{X}_{i-1,1},\\ 
    \vspace{0.3mm}
    X_{i,1}'\in Z(C),\\ 
    \vspace{0.3mm}
    i=2,\ldots,\ell.\end{array}
\right\},
\end{equation*}
where $\tilde{X}_{i-1,1}$ is defined as in (\ref{deftildeX}).
\end{theorem}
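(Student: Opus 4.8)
The plan is to follow the same column-by-column identification strategy used for the companion matrix in Lemma~\ref{lemmacentracom}, but now at the block level, exploiting that $G = \diag(G)$ consists of a single generalized Jordan block with diagonal blocks $C$ and sub-diagonal blocks $E$. Write a candidate $X \in M_{s\ell}(\FF)$ as a block matrix $X = [X_{i,k}]_{1\le i,k\le \ell}$ with each $X_{i,k}\in M_s(\FF)$, and expand the equation $GX = XG$ into its $s\times s$ blocks. Because $G$ has the two-diagonal structure, the $(i,k)$ block of $GX$ is $CX_{i,k} + EX_{i-1,k}$ (with $X_{0,k}=0$) and the $(i,k)$ block of $XG$ is $X_{i,k}C + X_{i,k+1}E$ (with $X_{i,\ell+1}=0$). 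So $GX=XG$ is equivalent to the family of equations
\begin{equation*}
CX_{i,k} + EX_{i-1,k} = X_{i,k}C + X_{i,k+1}E, \qquad 1\le i,k\le \ell .
\end{equation*}

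First I would treat the top-right corner and propagate downward-left. For $k=\ell$ the equation reads $CX_{i,\ell} + EX_{i-1,\ell} = X_{i,\ell}C$, i.e. $X_{i,\ell}C = CX_{i,\ell} + E X_{i-1,\ell}$; starting from $i=1$, Corollary~\ref{coreq}(2) forces $X_{0,\ell}=0$ (automatic) and $X_{1,\ell}\in Z(C)$, then inductively gives $X_{i-1,\ell}=0$ and $X_{i,\ell}\in Z(C)$, so in fact $X_{i,\ell}=0$ for $i<\ell$ and only $X_{\ell,\ell}\in Z(C)$ survives. More usefully, I would instead organize the induction along anti-diagonals / by the \emph{superdiagonal index} $d=k-i$: one shows by induction on $\ell - d$ (i.e. going from the bottom-left block toward the top-right) that $X_{i,k}$ depends only on $d=k-i$, that $X_{i,i}=:X_{1,1}\in Z(C)$ for all $i$, and that along each lower band $X_{i+ j,i}=:X_{j+1,1}$ is independent of $i$ and satisfies $X_{j+1,1}=X_{j+1,1}' + \widetilde{X_{j,1}}$ with $X_{j+1,1}'\in Z(C)$. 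The engine for the inductive step is exactly Lemma~\ref{lemaequations} (the $\tilde A$-augmented form of Lemma~\ref{lemaequationsxtilde}): given that two neighbouring blocks on band $d$ already have the form ``element of $Z(C)$ plus $\tilde A$'', the coupling equation $EX + CT = TC + YE$ forces the two band-$d$ blocks to be equal and forces the band-$(d{+}1)$ block $T$ to again be of the form $T'+\tilde X$ with $T'\in Z(C)$; equality across the band collapses $X_{i,k}$ to a function of $k-i$ alone, yielding the displayed Toeplitz-type block form. Conversely, one checks that any matrix of the stated shape commutes with $G$, which again reduces block-wise to the identity $EX + C\tilde X = \tilde X C + XE$ verified in the converse part of Lemma~\ref{lemaequationsxtilde}.

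The bookkeeping obstacle — and the main thing to be careful about — is the simultaneous double induction: the equations $CX_{i,k}+EX_{i-1,k}=X_{i,k}C+X_{i,k+1}E$ couple a block to its left neighbour (index $k+1$) and its upper neighbour (index $i-1$), so one must fix an ordering of the blocks in which, when processing block $(i,k)$, the blocks $(i-1,k)$ and $(i,k+1)$ are already known; processing columns right-to-left and, within a column, rows top-to-bottom, does this, and the ``$+\tilde A$'' terms (with $E\tilde A=\tilde A E=0$) are precisely what make Lemma~\ref{lemaequations} applicable at each step rather than only the bare Lemma~\ref{lemaequationsxtilde}. Once the pattern $X_{i,k}=0$ for $k>i$, $X_{i,k}=X_{i-k+1,1}$ for $k\le i$, with the recursive description of the $X_{j,1}$, is established for all bands, the theorem statement follows immediately, and the converse is the routine back-substitution using the already-proved identity for $\tilde X$.
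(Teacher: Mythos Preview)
Your proposal is correct and uses essentially the same engine as the paper: the block equations $CX_{i,k}+EX_{i-1,k}=X_{i,k}C+X_{i,k+1}E$ together with Corollary~\ref{coreq} and Lemmas~\ref{lemaequationsxtilde}/\ref{lemaequations}. The only organizational difference is that the paper argues by induction on the block size $\ell$, writing $G_{k+1}=\begin{bmatrix}G_k&0\\E_k&C\end{bmatrix}$ and peeling off the last block row and column at each step, whereas you process all $\ell^2$ blocks directly column by column (right to left, top to bottom within each column); your scheme is in fact the one the paper itself adopts a few lines later in the proof of Lemma~\ref{thecentG1G2col}. One small slip: your parenthetical ``going from the bottom-left block toward the top-right'' is reversed---the dependency on $(i-1,k)$ and $(i,k+1)$ forces you to start at the top-right and move toward the bottom-left (decreasing $d=k-i$), exactly as your column-wise ordering does.
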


\begin{proof}
 For $\ell=1$ the result is immediate. Assume that $\ell\geq 2$. We prove the theorem by induction on $k=2,\ldots,\ell$. For $k=2$ 
 it is straightforward to see that
\[Z\left(\left[\begin{array}{cc}
C & 0 \\ E & C
 \end{array}\right]\right)=
\left\{
 \left[\begin{array}{cc}
X & 0 \\ T & X
 \end{array}\right]:  \ T=T'+\tilde{X}, \quad X, T'\in Z(C)
\right\}.
\]

Assume that the property is true for $k$ and let us prove that it is satisfied for $k+1$.

Let us write
$$G_{k+1}=\left[\begin{array}{cc}G_{k}&0\\
E_{k}&C          \end{array}\right],$$
where $E_{k}=\left[\begin{array}{cccc}0&\ldots&0& E \end{array}\right]$.
Assume that $X_{k+1}\in Z(G_{k+1})$ and write
$$X_{k+1}= [X_{i,j}]_{1\leq i,j\leq k+1}=
\left[\begin{array}{c|c}
X_{k}&\begin{array}{c}X_{1,k+1}  \\ \ldots \\ X_{k,k+1}  \end{array}\\
\hline
\begin{array}{ccc}X_{k+1,1}  & \ldots &X_{k+1,k} \end{array}&X_{k+1,k+1} \end{array}\right], \quad   X_{i,j}\in M_{s}(\mathbb{F}).$$
Then,
\[ X_{k+1}G_{k+1}=G_{k+1}X_{k+1}\]
if and only if the following equations (\ref{eq1})-(\ref{eq4}) are satisfied
\begin{equation} \label{eq1}
 G_{k}X_{k}=X_{k}G_{k}+
 \left[\begin{array}{cccc}0&\ldots&0&X_{1,k+1}E\\
\vdots& &\vdots&\vdots\\
0&\ldots&0& X_{k,k+1}E \end{array}\right],
\end{equation}
\begin{equation} \label{eq2}
 G_{k}\left[\begin{array}{c}X_{1,k+1}\\ \vdots \\ X_{k,k+1} \end{array}\right]=
 \left[\begin{array}{c}X_{1,k+1}C\\ \vdots \\ X_{k,k+1}C \end{array}\right],
 \end{equation}
\begin{equation} \label{eq3}
 \left\{\begin{array}{ccc}
EX_{k,1}+CX_{k+1,1} & = &X_{k+1,1}C+X_{k+1,2}E, \\
\vdots  &\vdots  &  \vdots\\
 EX_{k,k}+CX_{k+1,k} & = &X_{k+1,k}C+X_{k+1,k+1}E,
 \end{array}\right.
 \end{equation}
 \begin{equation} \label{eq4}
 EX_{k,k+1}+CX_{k+1,k+1}=X_{k+1,k+1}C.
 \end{equation}
From equations (\ref{eq2}) and (\ref{eq4})
we have
\begin{equation*}
 \left\{\begin{array}{rcc}
 CX_{1,k+1} & = & X_{1,k+1}C\\
 EX_{1,k+1}+CX_{2,k+1}& = &X_{2,k+1}C\\
& \vdots &\\
 EX_{k-1,k+1}+CX_{k,k+1}& = &X_{k,k+1}C \\
 EX_{k,k+1}+CX_{k+1,k+1}& = &X_{k+1,k+1}C
 \end{array}\right.
\end{equation*}
and as a consequence of Corollary~\ref{coreq} we obtain $X_{1,k+1}=\ldots=X_{k,k+1}=0$ and $X_{k+1,k+1}\in Z(C)$.

Now, equation (\ref{eq1}) reduces to  $G_{k}X_{k}=X_{k}G_{k}$. Applying the induction hypothesis we can write
\begin{equation*}
 X_{k}=\left[\begin{array}{ccccc}
X_{1,1} & 0&\dots&\dots&0\\
X_{2,1}&X_{1,1}&0&\dots&0\\
X_{3,1}&X_{2,1}&X_{1,1}&\dots&0\\
\vdots &\vdots &\vdots &\ddots&\vdots \\
X_{k,1} &X_{k-1,k}&X_{1,k-2}&\dots &X_{1,1}
\end{array}\right],
 \end{equation*}
where $X_{1,1}\in Z(C)$ and $X_{i,1}=X_{i,1}'+\tilde{X}_{i-1,1}, X_{i,1}'\in Z(C)$
and $\tilde{X}_{i-1,1}$ is as in (\ref{deftildeX}), for $i=2,\ldots,k$.
It allows us to  rewrite equations (\ref{eq3}) as follows
\begin{equation*}
 \left\{\begin{array}{c}  CX_{k+1,k+1}=X_{k+1,k+1}C\\
                          EX_{1,1}+CX_{k+1,k}=X_{k+1,k}C+X_{k+1,k+1}E\\
                           EX_{2,1}+CX_{k+1,k-1}=X_{k+1,k-1}C+X_{k+1,k}E\\
                           \vdots\\
                           EX_{k-1,1}+CX_{k+1,2}=X_{k+1,2}C+X_{k+1,3}E\\
                         EX_{k,1}+CX_{k+1,1}=X_{k+1,1}C+X_{k+1,2}E
                          \end{array}\right.\end{equation*}
As $X_{1,1}$ and $X_{k+1,k+1}\in Z(C)$, by Lemma~\ref{lemaequationsxtilde} the second of these equations implies
that $X_{k+1,k+1}=X_{1,1}$ and $X_{k+1,k}=X'_{k+1,k}+\tilde{X}_{1,1}$ with $X_{k+1,k}'\in Z(C)$.
Now, from the third equation and Lemma~\ref{lemaequations} we obtain that $X_{k+1,k}=X_{2,1}$
and $X_{k+1,k-1}=X'_{k+1,k-1}+\tilde{X}_{2,1}$. Proceeding in the same way we obtain the desired result.
\end{proof}

The following corollary generalizes Corollary~\ref{coro:dim}.
\begin{corollary}\label{coro:dimGk}
Under the hypotesis of Theorem~\ref{corocentraGJ} we have that
$$\dim (Z(G))=\ell s.$$
\end{corollary}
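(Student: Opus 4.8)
The plan is to read off $\dim Z(G)$ directly from the parametrization of $Z(G)$ furnished by Theorem~\ref{corocentraGJ}. Since $C\in M_{s}(\FF)$, Corollary~\ref{corodim} gives $\dim Z(C)=s$. I would then introduce the $\FF$-linear map
\[
\Phi\colon \underbrace{Z(C)\times\cdots\times Z(C)}_{\ell}\longrightarrow M_{s\ell}(\FF)
\]
that sends $(A_{1},\dots,A_{\ell})$ to the block lower triangular (block Toeplitz) matrix of the shape displayed in Theorem~\ref{corocentraGJ} whose first block column $(X_{1,1},\dots,X_{\ell,1})$ is defined recursively by $X_{1,1}=A_{1}$ and $X_{i,1}=A_{i}+\tilde X_{i-1,1}$ for $i=2,\dots,\ell$, with the operation $X\mapsto\tilde X$ as in~(\ref{deftildeX}). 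This $\Phi$ is linear because $X\mapsto\tilde X$ is linear (it only reads off the last row of $X$) and because a block Toeplitz matrix depends linearly on its first block column; and by Theorem~\ref{corocentraGJ} the image of $\Phi$ is exactly $Z(G)$, since that theorem describes $Z(G)$ precisely as the set of matrices of this form.

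Next I would verify that $\Phi$ is a linear isomorphism onto $Z(G)$. Surjectivity is the forward implication of Theorem~\ref{corocentraGJ}: given $X\in Z(G)$ written as there with $X_{1,1}\in Z(C)$ and $X_{i,1}=X_{i,1}'+\tilde X_{i-1,1}$, $X_{i,1}'\in Z(C)$, one has $X=\Phi(X_{1,1},X_{2,1}',\dots,X_{\ell,1}')$. For injectivity, suppose $\Phi(A_{1},\dots,A_{\ell})=0$, so $X_{i,1}=0$ for every $i$. Then $A_{1}=X_{1,1}=0$, hence $\tilde X_{1,1}=0$, hence $A_{2}=X_{2,1}-\tilde X_{1,1}=0$, and inductively $A_{i}=X_{i,1}-\tilde X_{i-1,1}=0$ for all $i$. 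Therefore $\Phi$ is an isomorphism, and
\[
\dim Z(G)=\ell\,\dim Z(C)=\ell s .
\]

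An equivalent route is induction on $\ell$, tracking the proof of Theorem~\ref{corocentraGJ}: in passing from $G_{k}$ to $G_{k+1}$ the new upper column blocks $X_{1,k+1},\dots,X_{k,k+1}$ are forced to vanish, the new lower blocks $X_{k+1,k+1},\dots,X_{k+1,2}$ are forced to equal $X_{1,1},\dots,X_{k,1}$, and the only genuinely new free parameter is $X_{k+1,1}'\in Z(C)$; hence $\dim Z(G_{k+1})=\dim Z(G_{k})+s$, and with base case $\dim Z(G_{1})=\dim Z(C)=s$ this again gives $\dim Z(G)=\ell s$. The single delicate point in either approach — and the one I would be most careful about — is that the recursion $X_{i,1}=X_{i,1}'+\tilde X_{i-1,1}$ is triangular in the parameters $(X_{1,1},X_{2,1}',\dots,X_{\ell,1}')$, so that eliminating the terms $\tilde X_{i-1,1}$ neither introduces hidden dependencies nor discards solutions; this is precisely what makes the naive count of $\ell$ independent copies of $Z(C)$ valid.
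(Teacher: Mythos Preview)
Your argument is correct and is essentially the approach the paper has in mind: the corollary is stated there without proof, as an immediate consequence of the parametrization in Theorem~\ref{corocentraGJ} (and as a generalization of Corollary~\ref{coro:dim}, the case $\ell=2$). You have simply made the implicit parameter count rigorous by packaging it as a linear isomorphism $\Phi\colon Z(C)^{\ell}\to Z(G)$, which is exactly the right way to justify that the triangular recursion $X_{i,1}=X_{i,1}'+\tilde X_{i-1,1}$ neither overcounts nor undercounts.
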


\subsection{The centralizer of a generalized Jordan form}\label{The centralizer}

In order to obtain the centralizer of a generalized Jordan form we need to prove first two technical results.

\begin{lemma}\label{thecentG1G2col}
Let  $G_1\in M_{sa}(\FF), G_2\in M_{sb}(\FF)$, $a\geq b$, be two generalized Jordan blocks.
Let $T=\left[T_{i,j}\right]\in M_{sa\times sb}(\mathbb{F})$ be a block matrix with $T_{i,j}\in M_{s}(\FF)$
such that
\begin{equation} \label{conmG1G2}
G_1T=TG_2.
\end{equation}
Then,
 \begin{equation*} 
 T=\begin{bmatrix}
 0 \\
 T_1
\end{bmatrix},
\end{equation*}
with $T_1\in Z(G_{2})$.
\end{lemma}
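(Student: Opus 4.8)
The plan is to rewrite the commutation relation~(\ref{conmG1G2}) in terms of the $s\times s$ blocks of $T$ and then solve the resulting system one block-column at a time, moving from the rightmost column to the leftmost, with the technical lemmas of this subsection doing the work at each step. Write $T=[T_{i,j}]$ with $T_{i,j}\in M_{s}(\FF)$, $1\le i\le a$, $1\le j\le b$, and put $T_{0,j}=0$, $T_{i,b+1}=0$. Comparing blocks in~(\ref{conmG1G2}) yields
\begin{equation*}
CT_{i,j}+ET_{i-1,j}=T_{i,j}C+T_{i,j+1}E,\qquad 1\le i\le a,\ 1\le j\le b .
\end{equation*}
For the last column $j=b$ the term $T_{i,b+1}E$ vanishes, and iterating Corollary~\ref{coreq}(2) down the column---each step placing the current block in $Z(C)$ and forcing the one above it to vanish---gives $T_{i,b}=0$ for $i<a$ and $T_{a,b}\in Z(C)$.

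I would then continue by finite downward induction on $j$, the case $j=b$ just treated being the base. Suppose column $j+1$ is already known to have its first $a-b+j$ blocks equal to $0$ and its remaining blocks in the $Z(G_{2})$-shape of Theorem~\ref{corocentraGJ}: constant along block-diagonals, the bottom one in $Z(C)$, and each of the others equal to an element of $Z(C)$ plus a $\widetilde{(\,\cdot\,)}$-term (the operation~(\ref{deftildeX})). For $i\le a-b+j$ the block $T_{i,j+1}$ is already $0$, so the same Corollary~\ref{coreq}(2) chaining gives $T_{i,j}=0$ for $i\le a-b+j-1$ and $T_{a-b+j,j}\in Z(C)$. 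For $i\ge a-b+j+1$ the nonzero term $T_{i,j+1}E$ is present, and here Lemma~\ref{lemaequationsxtilde} (applied to the topmost such row, where the two given blocks lie plainly in $Z(C)$) together with Lemma~\ref{lemaequations} (applied to the rows below it) yields at once the matching $T_{i-1,j}=T_{i,j+1}$ and the shape $T_{i,j}=T_{i,j}'+\widetilde{T_{i-1,j}}$ with $T_{i,j}'\in Z(C)$. The hypothesis needed for Lemma~\ref{lemaequations}, that the $\widetilde{(\,\cdot\,)}$-parts of the two given blocks coincide, holds because of the matching identity produced one row higher, since $\widetilde{(\,\cdot\,)}$ depends only on the last row of its argument.

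Once the columns $j=b,b-1,\dots,1$ have all been processed, $T_{i,j}=0$ whenever $i-j<a-b$; in particular the first $a-b$ block rows of $T$ vanish, so $T=\begin{bmatrix}0\\ T_{1}\end{bmatrix}$ with $T_{1}=[T_{i,j}]_{a-b+1\le i\le a,\ 1\le j\le b}$. The relations accumulated along the way say precisely that $T_{1}$ is block lower-triangular, constant along its block-diagonals, with $(1,1)$-block in $Z(C)$ and first-column blocks of the form ``an element of $Z(C)$ plus a $\widetilde{(\,\cdot\,)}$-term''; by Theorem~\ref{corocentraGJ} this is exactly the statement $T_{1}\in Z(G_{2})$, which completes the argument.

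I expect the main obstacle to be purely organizational: the bottom-row analysis requires tracking, column by column, which blocks are $0$, which lie in $Z(C)$, and which are ``$Z(C)$ plus a tilde'', and checking at each use of Lemma~\ref{lemaequations} that the incoming tilde-parts agree. Nothing deep is involved once those two lemmas are in hand; it is the indexing of the nested induction---over the column $j$, and inside each column over the row $i$---that has to be set up carefully. A somewhat cleaner bookkeeping is available by inducting on $b$ instead: split off the first block row and column of $G_{1}$ and $G_{2}$, use Corollary~\ref{coreq}(1) to annihilate the rest of the first block row of $T$, apply the induction hypothesis to the lower-right $(a-1)\times(b-1)$ block part of $T$, and close with the first block column, separating the cases $a>b$ and $a=b$ only at the final step.
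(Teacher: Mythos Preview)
Your proposal is correct and follows essentially the same route as the paper: you process the block columns of $T$ from right to left, using Corollary~\ref{coreq}(2) iteratively to annihilate the upper blocks of each column, and then Lemmas~\ref{lemaequationsxtilde} and~\ref{lemaequations} to force the matching $T_{i-1,j}=T_{i,j+1}$ and the shape $T_{i,j}=T_{i,j}'+\widetilde{T_{i-1,j}}$ in the lower blocks. The paper does exactly this, only with the induction on $j$ left implicit (``proceeding analogously down the remaining columns''); your version makes the induction hypothesis and the verification that the tilde-parts agree explicit, which is a genuine improvement in clarity. The alternative induction on $b$ you sketch at the end is not in the paper and would also work.
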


\begin{proof}
Denoting $T=[Y_{i,j}]_{i=1,\ldots, a, j=1, \ldots, b}$, equation~(\ref{conmG1G2})  is
$$
\begin{bmatrix}
CY_{1,1} & \ldots & CY_{1,b-2} & CY_{1,b-1} & CY_{1,b} \\
EY_{1,1} +CY_{2,1} & \ldots & EY_{1,b-2} +CY_{2,b-2} & EY_{1,b-1} +CY_{2,b-1} & EY_{1,b} +CY_{2,b}  \\
\vdots & &\vdots & \vdots & \vdots \\
EY_{a-3,1}+ CY_{a-2,1} & \ldots &  EY_{a-3,b-2}+ CY_{a-2,b-2}  & EY_{a-3,b-1} +CY_{a-2,b-1} & EY_{a-3,b} +CY_{a-2,b}\\
EY_{a-2,1}+ CY_{a-1,1} & \ldots &  EY_{a-2,b-2}+ CY_{a-1,b-2} & EY_{a-2,b-1} +CY_{a-1,b-1} & EY_{a-2,b} +CY_{a-1,b}\\
EY_{a-1,1}+ CY_{a,1} & \ldots &  EY_{a-1,b-2}+ CY_{a,b-2} & EY_{a-1,b-1} +CY_{a,b-1} & EY_{a-1,b} +CY_{a,b}
\end{bmatrix}=$$
$$
=\begin{bmatrix}
Y_{1,1}C+Y_{1,2}E & \ldots &  Y_{1,b-2}C+Y_{1,b-1}E  & Y_{1,b-1}C+Y_{1,b}E &Y_{1,b}C \\
Y_{2,1}C+Y_{2,2}E & \ldots & Y_{2,b-2}C+Y_{2,b-1}E & Y_{2,b-1}C+Y_{2,b}E &Y_{2,b}C \\
\vdots & & \vdots &  \vdots & \vdots\\
Y_{a-2,1}C+Y_{a-2,2}E & \ldots & Y_{a-2,b-2}C+Y_{a-2,b-1}E & Y_{a-2,b-1}C+Y_{a-2,b}E &Y_{a-2,b}C\\
Y_{a-1,1}C+Y_{a-1,2}E & \ldots & Y_{a-1,b-2}C+Y_{a-1,b-1}E & Y_{a-1,b-1}C+Y_{a-1,b}E &Y_{a-1,b}C\\
Y_{a,1}C+Y_{a,2}E & \ldots & Y_{a,b-2}C+Y_{a,b-1}E & Y_{a,b-1}C+Y_{a,b}E &Y_{a,b}C
\end{bmatrix}.$$
Identifying the  block components of the last column  and taking into account Corollary \ref{coreq}, we  successively obtain that
$$Y_{1,b}=0, \quad Y_{2,b}=0, \quad \ldots \quad Y_{a-1,b}=0, \quad Y_{a,b}\in Z(C).$$
Then, identifying the  block components of the last but one column  and taking into account Corollary \ref{coreq} and Lemma \ref{lemaequations},
we successively obtain that
$$Y_{1,b-1}=0, \quad Y_{2,b-1}=0, \quad \ldots \quad Y_{a-2,b-1}=0, \quad Y_{a-1,b-1}=Y_{a,b}, $$
and
$$Y_{a,b-1}=Y_{a,b-1}'+\tilde{Y}_{a,b}, \quad  Y_{a,b}'\in Z(C).$$
Identifying the  block components of the third block column starting from the end and taking into account Corollary \ref{coreq}, Lemma \ref{lemaequations} and Lemma \ref{lemaequationsxtilde}, we obtain
$$Y_{1,b-2}=0, \quad Y_{2,b-2}=0, \quad \ldots \quad Y_{a-3,b-2}=0, \quad Y_{a-2,b-2}=Y_{a,b}, $$
and
$$Y_{a-1,b-2}=Y_{a,b-1}, \quad  Y_{a,b-2}=Y_{a,b-2}'+\tilde{Y}_{a,b}, \quad Y_{a-1,b-2}''\in Z(C).$$

Proceeding analogously down the remaining columns we obtain
$$
T=
\begin{matriz}{ccccc}
0  & 0 &  \ldots & 0 & 0 \\
\vdots & \vdots &  & \vdots  & \vdots \\
0 &  0 & \ldots & 0  & 0\\
Y_{a,b}  & 0& \ldots & 0& 0  \\
Y_{a,b-1}  & Y_{a,b} & \ldots & 0 & 0\\
\vdots & \vdots & \ddots & \vdots & \vdots \\
Y_{a,2}  & Y_{a,3}& \ldots & Y_{a,b}   & 0\\
Y_{a,1}  & Y_{a,2}&  \ldots & Y_{a,b-1} & Y_{a,b}
\end{matriz},$$
with $Y_{a,j}=Y_{a,j}'+\tilde{Y}_{a,j-1}$ and $Y_{a,b}, Y_{a,j}' \in Z(C)$ for $j=1, \ldots, b-1$ as desired.
\end{proof}

The next lemma can be proved in a similar way.

\begin{lemma}\label{thecentG1G2row}
Let $G_1, G_2$ be two generalized Jordan blocks as in (\ref{gJordanblock}), $G_1\in M_{sa}(\FF), G_2\in M_{sb}(\FF)$, $a\leq b$.
Let $T=[T_{i,j}]_{i=1, \ldots, a, j=1, \ldots, b}$ be a block matrix with $T_{i,j}\in M_{s}(\FF)$ such that
\begin{equation*} \label{conmG1G2_2}
G_1T=TG_2.
\end{equation*}
Then,
 \begin{equation*} 
 T=\begin{bmatrix}
 T_1 & 0
\end{bmatrix},
\end{equation*}
with $T_1\in Z(G_{1})$.
\end{lemma}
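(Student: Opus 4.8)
The plan is to mirror the proof of Lemma~\ref{thecentG1G2col} but transposing the roles of rows and columns. Write $T=[Y_{i,j}]_{i=1,\ldots,a,\ j=1,\ldots,b}$ with $Y_{i,j}\in M_s(\FF)$, and expand both sides of $G_1T=TG_2$ blockwise exactly as in the proof of Lemma~\ref{thecentG1G2col}. The difference is that now $a\leq b$, so instead of working column by column from the right, I would work \emph{row by row from the top}. The $(1,j)$-block equations read $CY_{1,j}=Y_{1,j-1}C+Y_{1,j}E$ for $j=1$ (with no $Y_{1,0}$ term, i.e. $CY_{1,1}=Y_{1,1}E$), and so on along the first block row; the first entry gives $CY_{1,1}=Y_{1,1}C+Y_{1,2}E$ after accounting for the lower-triangular shape of $G_1$ and $G_2$. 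Starting from the leftmost block of the first row and using Corollary~\ref{coreq}, one successively forces $Y_{1,1}=0,\ Y_{1,2}=0,\ \ldots$ until the last few columns, where Lemma~\ref{lemaequations} and Lemma~\ref{lemaequationsxtilde} pin down the surviving blocks in terms of $Y_{1,b}\in Z(C)$ and its tilde-shifts.

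First I would carry out the first block row carefully to see the pattern: reading the identities in the first row from left to right, Corollary~\ref{coreq}(2) (the $TC=CT+EX$ form) kills $Y_{1,1},Y_{1,2},\ldots,Y_{1,b-a}$, then Lemma~\ref{lemaequations} and Lemma~\ref{lemaequationsxtilde} identify $Y_{1,b-a+1},\ldots,Y_{1,b}$ as $Z(C)$-elements (plus tilde corrections) built from a single free block, say $Y_{1,b}$. Then I would induct downward on the block row index $i$: the $i$-th block row equations, after substituting the already-determined entries of row $i-1$, take exactly the shape handled by Lemmas~\ref{lemaequationsxtilde}--\ref{lemaequations}, so the first $b-a$ entries of row $i$ vanish and the remaining ones are determined, with the net effect that row $i$ equals row $i-1$ shifted one block to the right. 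After all $a$ rows are processed one reads off that the nonzero part of $T$ is the upper-left $sa\times sa$ block, which has precisely the lower-triangular-Toeplitz-with-tilde form characterizing $Z(G_1)$ in Theorem~\ref{corocentraGJ}; hence $T=[\,T_1\ \ 0\,]$ with $T_1\in Z(G_1)$.

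The main obstacle I anticipate is bookkeeping rather than a genuine conceptual difficulty: one must be careful that in the $a\leq b$ case the ``overhang'' of $b-a$ extra block columns lands on the left (forced to zero) rather than producing extra free parameters, and that the induction hypothesis is set up so that the block equations at stage $i$ really do match the hypotheses of Lemma~\ref{lemaequations} (in particular that the relevant entries of row $i-1$ are of the form $Z(C)$-element plus a tilde of the entry to their left). Once the first row and the inductive step are checked against those lemmas, the remaining rows are routine and the conclusion follows; for brevity I would simply say ``proceeding analogously along the remaining rows'' as in the companion lemma, since the argument is the transpose of the one already given in detail.
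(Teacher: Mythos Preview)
Your overall strategy---adapt the proof of Lemma~\ref{thecentG1G2col}---is exactly what the paper does (it simply says the lemma ``can be proved in a similar way''). However, your execution has the orientation reversed, and as written it would not produce the stated conclusion.

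Concretely, the first block row of $G_{1}T=TG_{2}$ gives
\[
CY_{1,j}=Y_{1,j}C+Y_{1,j+1}E\quad(1\le j\le b-1),\qquad CY_{1,b}=Y_{1,b}C,
\]
so the relevant case of Corollary~\ref{coreq} is part (1) (the form $CT=TC+YE$), not part (2), and it must be applied starting from the \emph{rightmost} column $j=b$ and moving left. This yields $Y_{1,b}=Y_{1,b-1}=\cdots=Y_{1,2}=0$ and $Y_{1,1}\in Z(C)$: the single surviving block in row $1$ is $Y_{1,1}$, not $Y_{1,b}$. Proceeding to row $i$ (again right to left, now also using Lemmas~\ref{lemaequationsxtilde} and~\ref{lemaequations} once $j\le i$) one obtains $Y_{i,j}=0$ for $j>i$ and the Toeplitz-with-tilde relations $Y_{i,i}=Y_{1,1}$, $Y_{i,j}=Y_{i-1,j-1}$, $Y_{i,1}=Y_{i,1}'+\tilde{Y}_{i-1,1}$. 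Hence the ``overhang'' of $b-a$ extra block columns is forced to zero on the \emph{right}, not on the left as you wrote; this is precisely what $T=[\,T_{1}\ \ 0\,]$ says, and is also what your own final sentence (``the nonzero part of $T$ is the upper-left $sa\times sa$ block'') asserts, in contradiction with your earlier claim that $Y_{1,1},\ldots,Y_{1,b-a}$ vanish. Once the direction is corrected the argument goes through exactly as you outline.
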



\begin{theorem}[Centralizer of a generalized Jordan form]\label{the:centGJ}
Let $G=\diag(G_1, \ldots, G_m)$ be a generalized Jordan matrix. 
Let $p_{G}=p^{r}$ be its characteristic polynomial, $p$ irreducible, $m_G=p^{r_1}, \ r_1\leq r$ and $\deg{(p)}=s$.
Let $\alpha=(\alpha_1, \ldots, \alpha_m)$ be the generalized Segre characteristic of $G$. If $X\in Z(G)$, then
$$X=[X_{i,j}]_{i,j=1, \ldots, m},$$
where $X_{i,j}\in M_{s\alpha_{i}\times s\alpha_{j}}(\mathbb{F})$  are block lower triangular Toeplitz matrices of the form:
\begin{enumerate}
 \item[1)] If $\alpha_i=\alpha_j$, then $X_{i,i}\in Z(G_{i})$.
\item[2)] If $\alpha_i<\alpha_j$, then 
\[X_{i,j}=
\left[\begin{array}{cc}
X_{i,i} & 0
\end{array}\right],\]
where $X_{i,i}\in Z(G_{i})$.
\item[3)] If $\alpha_i>\alpha_j$, then 
\[X_{i,j}=
\left[\begin{array}{c}
0 \\
X_{j,j}
\end{array}\right],\]
where $X_{j,j}\in Z(G_{j})$. 
\end{enumerate}

\end{theorem}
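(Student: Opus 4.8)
The plan is to write $X \in Z(G)$ in block form $X = [X_{i,j}]_{i,j=1,\dots,m}$ with $X_{i,j} \in M_{s\alpha_i \times s\alpha_j}(\FF)$, and observe that the equation $XG = GX$ decouples into the $m^2$ block equations $X_{i,j}G_j = G_i X_{i,j}$, since $G = \diag(G_1,\dots,G_m)$. Thus it suffices to analyze, for each ordered pair $(i,j)$, the matrix equation $G_i T = T G_j$ where $T = X_{i,j}$, and to show that its solution set is exactly the one described in the three cases. This is precisely the content of Lemma~\ref{thecentG1G2col}, Lemma~\ref{thecentG1G2row}, and Theorem~\ref{corocentraGJ}, so the proof is essentially an assembly of these three results according to the relative sizes of $\alpha_i$ and $\alpha_j$.

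First I would dispose of the diagonal-type case: if $\alpha_i = \alpha_j$ then $G_i = G_j$ (same generalized Jordan block), so $X_{i,j}G_j = G_i X_{i,j}$ says precisely $X_{i,j} \in Z(G_i)$, which is case 1) (and also covers the off-diagonal blocks $X_{i,j}$ with $i\neq j$ but $\alpha_i=\alpha_j$, whose shape is then described by Theorem~\ref{corocentraGJ}). Next, for $\alpha_i < \alpha_j$ (so $G_i$ is the smaller block), the equation $G_i X_{i,j} = X_{i,j} G_j$ is exactly the hypothesis of Lemma~\ref{thecentG1G2row} with $G_1 = G_i$, $G_2 = G_j$, $a = \alpha_i \le b = \alpha_j$; that lemma gives $X_{i,j} = \begin{bmatrix} X_{i,i} & 0 \end{bmatrix}$ with $X_{i,i} \in Z(G_i)$, which is case 2). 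Symmetrically, for $\alpha_i > \alpha_j$ the equation is the hypothesis of Lemma~\ref{thecentG1G2col} with $G_1 = G_i$, $G_2 = G_j$, $a = \alpha_i \ge b = \alpha_j$, yielding $X_{i,j} = \begin{bmatrix} 0 \\ X_{j,j} \end{bmatrix}$ with $X_{j,j} \in Z(G_j)$, which is case 3). Finally I would remark that in every case the block $X_{i,j}$ is lower triangular Toeplitz in the $s\times s$ sub-blocks: this is built into $Z(G_i)$ by Theorem~\ref{corocentraGJ}, and is preserved by the padding with a zero block row (case 2) or zero block column (case 3) along the appropriate side.

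The main point requiring care — though it is really folded into the cited lemmas rather than new here — is the bookkeeping identifying which border of $X_{i,j}$ receives the zero padding: for $\alpha_i < \alpha_j$ the nontrivial $Z(G_i)$-part sits in the \emph{left} $s\alpha_i$ columns (the zero block is the trailing block column), whereas for $\alpha_i > \alpha_j$ it sits in the \emph{bottom} $s\alpha_j$ rows (the zero block is the leading block row); this asymmetry is exactly what Lemmas~\ref{thecentG1G2row} and~\ref{thecentG1G2col} record, and it is consistent with the structure of the chains in Remark~\ref{rem:baseJ} (a homomorphism between blocks must send the top of a shorter chain to $0$). Thus no genuinely new estimate is needed: the theorem follows by applying the already-established block lemmas to each of the $m^2$ entries and collecting the results.
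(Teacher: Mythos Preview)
Your proposal is correct and follows essentially the same approach as the paper: block-decompose $X$ according to $G=\diag(G_1,\ldots,G_m)$, observe that $GX=XG$ splits into the $m^2$ intertwining equations $G_iX_{i,j}=X_{i,j}G_j$, and then invoke Theorem~\ref{corocentraGJ}, Lemma~\ref{thecentG1G2row}, and Lemma~\ref{thecentG1G2col} according to whether $\alpha_i=\alpha_j$, $\alpha_i<\alpha_j$, or $\alpha_i>\alpha_j$. Your write-up is in fact more detailed than the paper's own proof, which dispatches the argument in two sentences.
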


\begin{proof}
Let $X\in Z(G)$ and assume that  $X=[X_{i,j}]_{i,j=1, \ldots, m}$. Then, the block components of $X$ satisfy the following equations:
$$G_iX_{i,j}=X_{i,j}G_j, \quad i,j=1, \ldots, m.$$
The structure of the blocks $X_{i,j}$ for $i,j=1, \ldots, m$ is a direct consequence of Theorem \ref{corocentraGJ} and Lemmas \ref{thecentG1G2col} and \ref{thecentG1G2row}.
\end{proof}

\begin{example}\label{exem:centraJordan}
Let $G$ be a generalized Jordan matrix with minimal polynomial $m_{G}= p^{5}$, where $p$ is an irreducible polynomial of $\deg(p)>1$ 
and whose companion matrix is $C$ as in~(\ref{companion}). Let  $\alpha=(5,4,3,1,1)$ be its generalized Segre characteristic, i.e. 
the generalized Jordan form of $G$ is
$$G=
\begin{matriz}{ccccc|cccc|ccc|c|cc}
 C &  & & & & & & & & & & & &  \\
 E & C & & & & & & & & & & & &  \\
 & E & C & & & & & & & & & & & \\
 & & E & C & & & & & & & & & & \\
 & & & E & C & & & & & & & && \\
 \hline
 & & & & & C & & & & & & & &  \\
 & & & & & E & C & & & & & & &  \\
 & & & & & & E & C & & & & & &  \\
 & & & & & & & E & C & & & & &\\
 \hline
 & & & & & & & & & C & & & & \\
 & & & & & & & & & E & C & & & \\
 & & & & & & & & & & E & C & & \\
 \hline
 & & & & & & & & & & & & C & \\
 \hline
  & & & & & & & & & & & & & C
\end{matriz} $$

In this case a matrix $X\in Z(G)$ has the following form

$$X=\begin{matriz}{ccccc|cccc|ccc|c|c}
A_{1} &  &  &  &  &  &  &  &  &  &  &  &  & \\
A_{2} &A_{1} &  & &  &I_{1} &  & &  &  &  &  &  &  \\
A_{3} &A_{2} &A_{1} &  &   &I_{2} &I_{1} &  &  &L_{1} &  &  &  & \\
A_{4} & A_{3} &A_{2} & A_{1} &  & I_{3} &I_{2} & I_{1} &  & L_{2}&L_{1}  &  &  &  \\
A_{5} & A_{4} & A_{3}& A_{2} &A_{1} & I_{4}& I_{3} & I_{2} &I_{1} & L_{3}& L_{2}  &L_{1}&Q_{1} &W_{1}  \\
\hline
H_{1} &  &  &  &  & B_{1}&   &  &  &   &  &  &  & \\
H_{2} & H_{1}  &  &  &  & B_{2}& B_{1} &  &  & M_{1} &  &  &  & \\
H_{3} & H_{2} & H_{1}  &  &  &  B_{3}& B_{2}& B_{1}& & M_{2}&M_{1} & & &  \\
H_{4} & H_{3} &  H_{2}&H_{1} &  & B_{4}& B_{3}& B_{2} &B_{1} & M_{3}& M_{2}& M_{1}& R_{1}  & X_{1}  \\
\hline
J_{1} &  &  &  &  &K_{1} &  &  &  &C_{1} &  &  &  &  \\
J_{2} &J_{1}  &  &  &  & K_{2}& K_{1}  &  &  &  C_{2}& C_{1}  &  &  &  \\
J_{3} &J_{2}  &J_{1}  &  &  &  K_{3}& K_{2} & K_{1} &  & C_{3} & C_{2}&C_{1} & S_{1}&Y_{1} \\
\hline
N_{1} &  &  &  &  &  O_{1}  &  &  &  &P_{1}  &  &  &  D_{1}&G_{1}  \\
\hline
T_{1} &  &  &  &  & U_{1} &  &  &  & V_{1} &  &  &  F{1}& E_{1}
\end{matriz},$$
where $A_{1}\in Z(C)$ and for $i=2,\ldots,5$, $A_{i}=A'_{i}+\tilde{A}_{i-1}$ with $A_{i}'\in Z(C)$ and $\tilde{A}_{i-1}$ is defined 
as in~(\ref{deftildeX}).
An analogous pattern occurs in each block.
\end{example}


\section{The centralizer of a matrix over a perfect field}\label{sec:Perfect}

When $\FF$ is a perfect field we can replace the matrices $E$ by identity matrices $I$ in the generalized Jordan form, obtaining 
the generalized Jordan canonical form of the first kind (see Theorem \ref{jordan_separable}). In fact, a perfect 
field is not a necessary condition, it is enough that the polynomial $p$ is separable.

\medskip

In what follows, we calculate the centralizer of a generalized Jordan matrix of the first kind. Although it is not a particular case of the centralizer of a generalized Jordan form, we can easily derive the technical results we need to 
obtain the centralizer in this case from those obtained in Section \ref{gJm}. 

An example of this case is the centralizer of the real Jordan form.

\medskip


The following results are variants of Lemma~\ref{lemaequations} and Corollary~\ref{coreq}, respectively,  adapted to this case.
	
\begin{lemma}\label{lemaequations_sep}
 Let $C\in M_{n}(\mathbb{F})$ be a companion matrix as in (\ref{companion}).
 Let $X,Y\in Z(C)$ and $T \in M_{n}(\mathbb{F})$. Then
 \begin{equation*}\label{eqn2_sep}
        \left.\begin{array}{c}
           X+CT=TC+Y\\
           \end{array}\right.\Leftrightarrow\ X=Y,  \  T \in Z(C). \end{equation*}

\end{lemma}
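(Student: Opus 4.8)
The plan is to prove the equivalence by direct column-wise comparison, mirroring the proof of Lemma~\ref{lemaequationsxtilde} but in the simpler setting where $E$ is replaced by $I$. First I would note the ``$\Leftarrow$'' direction is trivial: if $X=Y$ and $T\in Z(C)$, then $CT=TC$ and $X+CT=X+TC=TC+Y$. So the content is in the forward direction.

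For ``$\Rightarrow$'', assume $X+CT=TC+Y$ with $X,Y\in Z(C)$ and $T\in M_n(\FF)$ arbitrary. Using Lemma~\ref{lemmacentracom}, write $X=[X_{\ast1}\ CX_{\ast1}\ \ldots\ C^{n-1}X_{\ast1}]$ and similarly for $Y$. Expanding $TC$ column-by-column exactly as in the proof of Lemma~\ref{lemmacentracom} (the last column of $TC$ is $-c_0T_{\ast1}-\cdots-c_{n-1}T_{\ast n}$), and comparing columns $1,\ldots,n$ of the identity $X+CT=TC+Y$, the first $n-1$ column identities read
\[
X_{\ast j} + CT_{\ast j} = T_{\ast j+1} + Y_{\ast j}, \qquad j=1,\ldots,n-1,
\]
and since $X_{\ast j}=C^{j-1}X_{\ast1}$, $Y_{\ast j}=C^{j-1}Y_{\ast1}$, these give recursively
\[
T_{\ast j+1} = CT_{\ast 1} \text{'s contribution} + C^{j-1}(X_{\ast1}-Y_{\ast1}) + \cdots,
\]
so that $T_{\ast j+1} = C^{j}T_{\ast1}' $-type terms plus an explicit combination of $X_{\ast1}-Y_{\ast1}$. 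The last column identity $X_{\ast n}+CT_{\ast n} = -c_0T_{\ast1}-\cdots-c_{n-1}T_{\ast n} + Y_{\ast n}$, after substituting the expressions for $T_{\ast2},\ldots,T_{\ast n}$ and using $c_0I+c_1C+\cdots+c_{n-1}C^{n-1}+C^n=0$ (the Cayley--Hamilton / minimal polynomial relation for $C$), collapses all the $T_{\ast1}$ terms and leaves an equation that forces $X_{\ast1}=Y_{\ast1}$, hence $X=Y$. Feeding $X_{\ast1}=Y_{\ast1}$ back into the recursion for the $T_{\ast j}$ yields $T_{\ast j+1}=C^{j}T_{\ast1}$ for all $j$, i.e.\ $T=[T_{\ast1}\ CT_{\ast1}\ \ldots\ C^{n-1}T_{\ast1}]\in Z(C)$ by Lemma~\ref{lemmacentracom}.

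The main obstacle is purely bookkeeping: tracking how the terms in $X_{\ast1}-Y_{\ast1}$ accumulate through the substitutions and verifying they vanish precisely when $X_{\ast1}=Y_{\ast1}$. This is the $\tilde X=0$ specialization of the computation already carried out in the proof of Lemma~\ref{lemaequationsxtilde}; indeed the cleanest route is simply to observe that, since $E\tilde A=\tilde A E=0$ was what made the general statement work, here one can either repeat that computation verbatim dropping the $\tilde X$ terms (which never arise because no $E$ is present), or invoke Corollary~\ref{corolcentracom} in place of the explicit expansion to shortcut the last-column analysis. I expect no genuine difficulty beyond organizing the algebra, so a short self-contained proof in the style of Lemma~\ref{lemmacentracom}'s proof suffices.
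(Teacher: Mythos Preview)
Your column-by-column reduction is the right skeleton, and carrying it out does collapse the $T_{\ast 1}$ terms via $p(C)=0$ exactly as you say. But the equation you are left with is not one that ``forces $X_{\ast1}=Y_{\ast1}$'' by pure bookkeeping: working through your recursion one finds $T_{\ast j}=C^{j-1}T_{\ast1}+(j-1)C^{j-2}(X_{\ast1}-Y_{\ast1})$, and substituting into the last column identity yields precisely
\[
p(C)\,T_{\ast1}+p'(C)\,(X_{\ast1}-Y_{\ast1})=0,
\]
i.e.\ $p'(C)(X_{\ast1}-Y_{\ast1})=0$. To conclude $X_{\ast1}=Y_{\ast1}$ you must know that $p'(C)$ is invertible, and this is exactly where the \emph{separability} hypothesis of Section~\ref{sec:Perfect} enters: $p$ irreducible and separable gives $p'\neq 0$, hence $p'(C)\in Z(C)\setminus\{0\}$, hence $\det p'(C)\neq 0$ by Corollary~\ref{corodim}. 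Your write-up never invokes this, and the analogy with Lemma~\ref{lemaequationsxtilde} is misleading here: that lemma needs no separability because $EX$ and $YE$ retain only one row/column of $X,Y$, which changes the final equation completely.

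Without separability the step genuinely fails. For instance, over $\FF_2(t)$ with $p(x)=x^2+t$ (irreducible, inseparable) one has $p'=0$, and taking $T=\begin{bmatrix}1&0\\0&0\end{bmatrix}$ gives $CT-TC=C\in Z(C)\setminus\{0\}$, so $X=C$, $Y=0$, $T\notin Z(C)$ satisfies $X+CT=TC+Y$ with $X\neq Y$. The paper's own argument goes through Lemma~\ref{coreq_sep}: it sets $Y'=Y-X\in Z(C)$, runs the same column computation to reach $p'(C)Y'_{\ast1}=0$, and then explicitly appeals to separability plus Corollary~\ref{corodim} to get $Y'_{\ast1}=0$. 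Your outline becomes correct once you insert this step; as written, the phrase ``no genuine difficulty beyond organizing the algebra'' overlooks the one non-algebraic input the lemma actually needs.
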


This result is, in fact, an immediate consequence of the next lemma.

\begin{lemma}\label{coreq_sep}
 Let $C\in M_{n}(\mathbb{F})$ be a companion matrix as in (\ref{companion}). 
Let $Y\in Z(C)$ and $T \in M_{n}(\mathbb{F})$. Then
\begin{equation*}\label{eqn1_sep}
        \left.\begin{array}{c}
           CT=TC+Y\\
	\end{array}\right.\Leftrightarrow \ Y=0,\ T\in Z(C).
\end{equation*}

\end{lemma}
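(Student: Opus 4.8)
The plan is to prove Lemma~\ref{coreq_sep} directly from the column-identification technique already used in Lemma~\ref{lemmacentracom} and Corollary~\ref{corolcentracom}, and then note that Lemma~\ref{lemaequations_sep} follows by applying it to $T$ together with the observation that $X, Y \in Z(C)$ commute with $C$. For the forward implication of Lemma~\ref{coreq_sep}, I would write $T = [T_{*1}, \ldots, T_{*n}]$ and $Y = [Y_{*1}, \ldots, Y_{*n}]$, and compare $CT$ with $TC + Y$ column by column, exactly as in the proof of Lemma~\ref{lemmacentracom} but now carrying the extra additive term $Y$. Identifying columns $1$ through $n-1$ gives the recursion
\begin{equation*}
T_{*,j+1} = C T_{*j} - Y_{*j}, \qquad j = 1, \ldots, n-1,
\end{equation*}
so every column of $T$ is determined by $T_{*1}$ and the columns of $Y$. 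The last column yields the single constraint
\begin{equation*}
C T_{*n} = -c_0 T_{*1} - c_1 T_{*2} - \cdots - c_{n-1} T_{*n} - Y_{*n}.
\end{equation*}

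The key step is to unwind this last equation. Substituting the recursion repeatedly, $T_{*j}$ is a polynomial in $C$ applied to $T_{*1}$ minus a combination of $Y_{*1}, \ldots, Y_{*,j-1}$; plugging these into the last equation and using $m_C(C) = c_0 I + c_1 C + \cdots + c_{n-1} C^{n-1} + C^n = 0$, the $T_{*1}$-terms collapse and one is left with a linear relation purely among the columns of $Y$. Because $Y \in Z(C)$, Corollary~\ref{corolcentracom} tells us that the columns $Y_{*j}$ are themselves determined by $Y_{*1}$ (equivalently by the last row of $Y$) through the companion structure, and the resulting relation forces $Y_{*1} = 0$, hence $Y = 0$. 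Once $Y = 0$, the recursion reduces to $T_{*,j+1} = C T_{*j}$, so $T = [T_{*1}, C T_{*1}, \ldots, C^{n-1} T_{*1}]$, i.e.\ $T \in Z(C)$ by Lemma~\ref{lemmacentracom}. Conversely, if $Y = 0$ and $T \in Z(C)$ then trivially $CT = TC = TC + Y$, which finishes Lemma~\ref{coreq_sep}.

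For Lemma~\ref{lemaequations_sep}, given $X, Y \in Z(C)$ and $T$ with $X + CT = TC + Y$, rewrite this as $CT = TC + (Y - X)$ and apply Lemma~\ref{coreq_sep} with $Y$ replaced by $Y - X$ (which lies in $Z(C)$ since $Z(C)$ is a subspace): we conclude $Y - X = 0$ and $T \in Z(C)$. The converse is immediate. The main obstacle I anticipate is the bookkeeping in the "unwinding" step — verifying that, after the substitution, the coefficient matrix acting on the columns of $Y$ is exactly the (invertible) companion-type matrix that appeared in Corollary~\ref{corolcentracom}, so that $Y = 0$ genuinely follows rather than merely some weaker relation. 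In the separable setting this is cleaner than in Lemma~\ref{lemaequations} precisely because there is no $E$-term and no $\tilde{X}$ correction, so the computation is the bare analogue of Corollary~\ref{corolcentracom}'s, and I would present it by reducing explicitly to equation~(\ref{aux}).
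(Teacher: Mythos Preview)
Your column-identification strategy is exactly the paper's, and the reduction of Lemma~\ref{lemaequations_sep} to Lemma~\ref{coreq_sep} via $Y-X\in Z(C)$ is fine. The gap is in the ``unwinding'' step, and it is not just bookkeeping.

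When you substitute the recursion $T_{\ast j}=C^{j-1}T_{\ast 1}-(j-1)C^{j-2}Y_{\ast 1}$ (using $Y_{\ast j}=C^{j-1}Y_{\ast 1}$) into the last-column identity and use $p(C)=0$ to kill the $T_{\ast 1}$ terms, what remains is \emph{not} the triangular system of equation~(\ref{aux}); it is
\[
p'(C)\,Y_{\ast 1}=0,
\]
where $p'$ is the formal derivative of $p$. Nothing in Corollary~\ref{corolcentracom} lets you conclude $Y_{\ast 1}=0$ from this; that corollary only rewrites the first column of a matrix in $Z(C)$ in terms of its last row, it does not produce an invertible operator acting on $Y_{\ast 1}$. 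The invertibility you need is that of $p'(C)$, and this is precisely where separability enters: if $p$ is separable then $\gcd(p,p')=1$, so $p'(C)\neq 0$, and since $p'(C)\in Z(C)$ with $p$ irreducible, Corollary~\ref{corodim}(3) gives $\det p'(C)\neq 0$, whence $Y_{\ast 1}=0$. Without separability the lemma is actually false (take $p(x)=x^{q}-a$ in characteristic $q$: then $p'(C)=0$ and any $Y\in Z(C)$ satisfies the relation), so your expectation that the argument is ``the bare analogue of Corollary~\ref{corolcentracom}'s'' and that separability merely makes things cleaner is mistaken --- separability is the whole point of this lemma.
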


\begin{proof}
By Lemma~\ref{lemmacentracom} we have that
\[Y \in Z(C) \Rightarrow  Y= \left[\begin{array}{cccc} Y_{\ast 1} & CY_{\ast 1} & \ldots & C^{n-1}Y_{\ast 1}  \end{array}\right].\]
$$CT=\left[\begin{array}{ccc} CT_{\ast 1} & \ldots & CT_{\ast n} \end{array}\right]$$
$$TC+Y= \left[\begin{array}{cccc} T_{\ast 2} & \ldots & T_{\ast n} & -c_0T_{\ast 1}- \ldots -c_{n-1}T_{\ast n} \end{array}\right] +$$
$$\left[\begin{array}{cccc} Y_{\ast 1} & CY_{\ast 1} & \ldots & C^{n-1}Y_{\ast 1}\end{array} \right].$$
Identifying the two expressions we obtain:
\begin{equation*} \label{X_sep}
 \left.\begin{array}{l}
T_{\ast 2}=CT_{\ast 1} + Y_{\ast 1} \\
T_{\ast 3}=CT_{\ast 2} + CY_{\ast 1}=+C^2T_{\ast 1} +2CY_{\ast 1}\\
\ldots \\
T_{\ast i}=C^{i-1}T_{\ast 1} + (i-1)C^{i-2}Y_{\ast 1}\\
\ldots \\
T_{\ast n}= C^{n-1}T_{\ast 1} + (n-1)C^{n-2}Y_{\ast 1}\\
-c_0T_{\ast 1}- \ldots -c_{n-1}T_{\ast n}=CT_{\ast n} + C^{n-1}Y_{\ast 1}
\end{array}\right\}
\end{equation*}
Replacing the values of all $T_{\ast i}$ in the last equation we have:

\noindent
$-c_0T_{\ast 1}- c_1(CT_{\ast 1}+ Y_{\ast 1}) \ldots  -c_{i-1}(C^{i-1}T_{\ast 1} + (i-1)C^{i-2}Y_{\ast 1})   \ldots -c_{n-1}(C^{n-1}T_{\ast 1} + (n-1)C^{n-2}Y_{\ast 1})=
C( C^{n-1}T_{\ast 1} + (n-1)C^{n-2}Y_{\ast 1}+ C^{n-1}Y_{\ast 1},$

\noindent
therefore,
$$p(C)T_{\ast 1}=p'(C)Y_{\ast 1}.$$
Observe that $p(C)=0$ and since $p$ is separable $p'(C)\neq 0$. Moreover, $p'(C) \in Z(C)$. By Corollary \ref{corodim} we have that $\det(p'(C))\neq 0$. Hence,  
$Y_{\ast 1}=0$. 

The converse is trivial.
\end{proof}

\begin{theorem}[Centralizer of a generalized Jordan block of the first kind]\label{corocentraGJ_sep}

Let  $G\in M_{s\ell}(\mathbb{F})$ be a generalized Jordan block of the first kind with $m_{G}=p^{\ell}$, $\deg(p)=s$, $p$ irreducible 
and separable. Then, the centralizer $Z(G)$ of $G$ is
\begin{equation*}\label{centralizer_sep}
\left\{
\left[\begin{array}{cccc}
X_{1} & 0 & \ldots &  0 \\
X_{2}&X_{1}&\ldots& 0 \\
\vdots&\vdots & \ddots& \vdots\\

X_{\ell}&X_{\ell-1}&\dots&X_{1}
 \end{array}\right]
, \  X_{i}\in Z(C),\ i=1,\ldots,\ell
\right\}.
\end{equation*}

\end{theorem}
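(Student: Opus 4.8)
The plan is to argue by induction on $\ell$, imitating the proof of Theorem~\ref{corocentraGJ} but using the ``first kind'' analogues of the technical results: Lemma~\ref{coreq_sep} in place of Corollary~\ref{coreq}, and Lemma~\ref{lemaequations_sep} in place of Lemmas~\ref{lemaequationsxtilde} and~\ref{lemaequations}. The structural simplification with respect to Theorem~\ref{corocentraGJ} — namely that the blocks of an element of $Z(G)$ now lie \emph{literally} in $Z(C)$, with no $\tilde X$ correction — is a direct reflection of the fact that the subdiagonal blocks of $G$ are now the identity, and $I$ commutes with every matrix of $M_s(\FF)$; this is exactly why the term $\tilde X$ present in Lemma~\ref{lemaequationsxtilde} has disappeared in Lemma~\ref{lemaequations_sep}.

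For $\ell=1$ there is nothing to prove. For $\ell=2$, writing $G=\begin{bmatrix} C&0\\ I&C\end{bmatrix}$ and imposing $GX=XG$ on a $2\times 2$ block matrix $X=[X_{i,j}]$, the $(1,1)$ block equation reads $CX_{1,1}=X_{1,1}C+X_{1,2}$, so Lemma~\ref{coreq_sep} gives $X_{1,2}=0$ and $X_{1,1}\in Z(C)$; the $(2,2)$ equation then gives $X_{2,2}\in Z(C)$, and the $(2,1)$ equation $X_{1,1}+CX_{2,1}=X_{2,1}C+X_{2,2}$ gives, by Lemma~\ref{lemaequations_sep}, $X_{1,1}=X_{2,2}$ and $X_{2,1}\in Z(C)$, which is the claimed form. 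For the inductive step I would write $G_{k+1}=\begin{bmatrix} G_k&0\\ I_k&C\end{bmatrix}$ with $I_k=[\,0\ \cdots\ 0\ I\,]$, partition $X_{k+1}\in Z(G_{k+1})$ into an array $[X_{i,j}]_{1\le i,j\le k+1}$ of $s\times s$ blocks, and translate $X_{k+1}G_{k+1}=G_{k+1}X_{k+1}$ into the exact analogues of equations~(\ref{eq1})--(\ref{eq4}), with every occurrence of $E$ replaced by $I$.

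The analogues of~(\ref{eq2}) and~(\ref{eq4}) yield the chain $CX_{1,k+1}=X_{1,k+1}C$, $X_{1,k+1}+CX_{2,k+1}=X_{2,k+1}C,\ \dots,\ X_{k,k+1}+CX_{k+1,k+1}=X_{k+1,k+1}C$, and repeated application of Lemma~\ref{coreq_sep} forces $X_{1,k+1}=\dots=X_{k,k+1}=0$ and $X_{k+1,k+1}\in Z(C)$. Then the analogue of~(\ref{eq1}) collapses to $G_kX_k=X_kG_k$, so the induction hypothesis applies to the leading block $X_k$, which is therefore block lower triangular Toeplitz with first-column blocks $A_1,\dots,A_k\in Z(C)$ and diagonal block $A_1$. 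Substituting this into the analogue of~(\ref{eq3}) and processing the resulting $k$ equations from the bottom one upward, Lemma~\ref{lemaequations_sep} gives successively $X_{k+1,k+1}=A_1$, $X_{k+1,k}=A_2,\ \dots,\ X_{k+1,2}=A_k$, and finally $X_{k+1,1}=:A_{k+1}\in Z(C)$; hence $X_{k+1}$ is the block lower triangular Toeplitz matrix with blocks $A_1,\dots,A_{k+1}\in Z(C)$, which completes the induction. For the reverse inclusion, any matrix $B$ of the displayed form commutes with $D=\diag(C,\dots,C)$ because each of its blocks lies in $Z(C)$, and commutes with the nilpotent part $N$ of $G$ (cf.\ Remark~\ref{rem:varios1type}) because $B$ is itself block lower triangular Toeplitz; hence $B$ commutes with $G=D+N$.

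The step that demands the most care is the bottom-up application of Lemma~\ref{lemaequations_sep} in the analogue of~(\ref{eq3}): at each stage one must check that both matrices fed to the lemma already belong to $Z(C)$ — the diagonal block $A_1$ of $X_k$ and the block $X_{k+1,k+1}$ at the first stage, and the previously identified block $X_{k+1,j+1}$ at the subsequent stages — so that the hypotheses of Lemma~\ref{lemaequations_sep} are met; there is no essential difficulty here. Finally, I would note that, since $p$ is separable, there is a shorter alternative avoiding the induction altogether: $G=D+N$ is the Jordan--Chevalley decomposition, so $D$ and $N$ are polynomials in $G$ and hence $Z(G)=Z(D)\cap Z(N)$; one checks directly that $Z(D)$ consists of the block matrices all of whose $s\times s$ blocks lie in $Z(C)$ and that $Z(N)$ consists exactly of the block lower triangular Toeplitz matrices, and the intersection of these two sets is precisely the set in the statement.
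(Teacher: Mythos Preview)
Your main inductive argument is correct and is exactly the paper's approach: the paper's entire proof is ``follow step by step the proof of Theorem~\ref{corocentraGJ}, replacing $E$ by the identity matrix,'' and you have spelled this out in full using Lemmas~\ref{lemaequations_sep} and~\ref{coreq_sep} in place of Lemmas~\ref{lemaequationsxtilde}/\ref{lemaequations} and Corollary~\ref{coreq}. One small point of hygiene in your $\ell=2$ warm-up: before invoking Lemma~\ref{coreq_sep} on the $(1,1)$ equation you need $X_{1,2}\in Z(C)$, which comes from the $(1,2)$ equation $CX_{1,2}=X_{1,2}C$; you handle this correctly in the general inductive step, so this is only a presentational issue.

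Your closing alternative via the Jordan--Chevalley decomposition $G=D+N$ is a genuinely different and shorter route that the paper does not take. It trades the induction for the structural fact (available precisely because $p$ is separable, cf.\ Remark~\ref{rem:varios1type}) that $D$ and $N$ are polynomials in $G$, whence $Z(G)=Z(D)\cap Z(N)$; the two centralizers are then computed directly as ``all blocks in $Z(C)$'' and ``block lower triangular Toeplitz,'' respectively. This is cleaner and makes transparent why the $\tilde X$ correction terms vanish in the separable case, whereas the paper's inductive approach has the advantage of paralleling the nonseparable argument uniformly.
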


\begin{proof}
 For $\ell=1$ the result is immediate. Assume that $\ell\geq 2$. We prove the theorem by induction on $k=2,\ldots,\ell$.  For $k=2$, it can be proved as a consequence of Lemmas ~\ref{lemaequations_sep},~\ref{coreq_sep}. 
Assume that the hypothesis is true for $k$. To prove that it is also true for $k+1$ it is enough to follow step by step the proof  
of Theorem \ref{corocentraGJ}, replacing $E$ by the identity matrix.
\end{proof}

 Observe that the centralizer in this case is analogous to the centralizer obtained in Theorem \ref{corocentraGJ}, but now there is no dependency between 
 the blocks of a lower diagonal and those of the diagonal immediately above it.

In order to obtain the centralizer of a generalized Jordan form of the first kind, we need to translate Lemmas~\ref{thecentG1G2col} and~\ref{thecentG1G2row}
to the case where the polynomial $p$ is separable. Their proofs are analogous to those of 
Lemmas~\ref{thecentG1G2col} and~\ref{thecentG1G2row}.
 
\begin{lemma}\label{thecentG1G2col_sep}
Let  $G_1\in M_{sa}(\FF), G_2\in M_{sb}(\FF)$, $a\geq b$ be two generalized Jordan blocks of the first kind.
Let $T=\left[T_{i,j}\right]\in M_{sa\times sb}(\mathbb{F})$ be a block matrix with $T_{i,j}\in M_{s}(\mathbb{F})$ such that
\begin{equation*} \label{conmG1G2_sep}
G_1T=TG_2.
\end{equation*}
Then,
 \begin{equation*} 
 T=\begin{bmatrix}
 0 \\
 T_1
\end{bmatrix},
\end{equation*}
with $T_1\in Z(G_{2})$.
\end{lemma}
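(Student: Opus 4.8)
The strategy is to imitate the argument of Lemma \ref{thecentG1G2col}, working column by column in the block structure, but using the separable versions of the auxiliary results (Lemma \ref{coreq_sep} and Lemma \ref{lemaequations_sep}) in place of Corollary \ref{coreq} and Lemmas \ref{lemaequations}, \ref{lemaequationsxtilde}. Writing $T=[Y_{i,j}]_{i=1,\ldots,a,\ j=1,\ldots,b}$ with $Y_{i,j}\in M_s(\FF)$, the equation $G_1T=TG_2$ becomes, after expanding the block products (now with $I$ in the subdiagonal rather than $E$), a grid of equations of the form
\[
Y_{i-1,j}+CY_{i,j}=Y_{i,j}C+Y_{i,j+1},
\]
with the obvious conventions $Y_{0,j}=0$ and $Y_{i,b+1}=0$. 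First I would read off the last block column: there the equations read $Y_{i-1,b}+CY_{i,b}=Y_{i,b}C$, so Lemma \ref{coreq_sep} gives successively $Y_{1,b}=0$, $Y_{2,b}=0$, \ldots, $Y_{a-1,b}=0$, and $Y_{a,b}\in Z(C)$.

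Next I would move to the last-but-one block column, whose equations are $Y_{i-1,b-1}+CY_{i,b-1}=Y_{i,b-1}C+Y_{i,b}$. For $i\le a-1$ the right-hand term $Y_{i,b}$ vanishes, so Lemma \ref{coreq_sep} forces $Y_{1,b-1}=\cdots=Y_{a-1,b-1}=0$ — wait, one must be careful: the equation for $i=a-1$ is $Y_{a-2,b-1}+CY_{a-1,b-1}=Y_{a-1,b-1}C+Y_{a,b}$ with $Y_{a,b}\in Z(C)$, and here Lemma \ref{lemaequations_sep} applies to give $Y_{a-1,b-1}=Y_{a,b}$ and then $Y_{a-2,b-1}=0$; cascading upward, $Y_{i,b-1}=0$ for $i\le a-2$. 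Finally the bottom equation $Y_{a-1,b-1}+CY_{a,b-1}=Y_{a,b-1}C$ combined with $Y_{a-1,b-1}=Y_{a,b}\in Z(C)$ and Lemma \ref{lemaequations_sep} gives $Y_{a,b-1}\in Z(C)$ as well (no $\tilde{\,\cdot\,}$ correction term appears, since in the separable case there is no coupling between consecutive diagonals). Iterating this procedure leftward through the remaining columns, at the $k$-th column from the right one obtains $Y_{i,b-k+1}=0$ for $i<a-k+1$, $Y_{a-k+1+j,\,b-k+1}=Y_{a,b-j}$ for $0\le j\le k-1$, and all the nonzero $Y_{a,j}$ lie in $Z(C)$.

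Assembling these relations produces exactly a matrix of the claimed shape
\[
T=\begin{bmatrix} 0 \\ T_1\end{bmatrix},\qquad
T_1=\begin{bmatrix}
Y_{a,b} & 0 & \cdots & 0\\
Y_{a,b-1} & Y_{a,b} & \cdots & 0\\
\vdots & \vdots & \ddots & \vdots\\
Y_{a,1} & Y_{a,2} & \cdots & Y_{a,b}
\end{bmatrix},
\]
a block lower-triangular Toeplitz matrix whose entries lie in $Z(C)$; by Theorem \ref{corocentraGJ_sep} this is precisely the condition $T_1\in Z(G_2)$. The only real subtlety — the place where I expect the bookkeeping to be delicate — is getting the indexing right in the inductive step over columns: one must track which equations are governed by Lemma \ref{coreq_sep} (the "homogeneous" ones giving zeros) versus Lemma \ref{lemaequations_sep} (the ones propagating the diagonal value), and verify that the top $a-b$ block rows stay zero throughout. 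Everything else is a routine transcription of the proof of Lemma \ref{thecentG1G2col} with $E\mapsto I$.
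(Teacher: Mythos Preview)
Your plan is correct and is exactly the approach the paper intends: the paper does not spell out a proof but simply says it is ``analogous to those of Lemmas \ref{thecentG1G2col} and \ref{thecentG1G2row}'', and your column-by-column sweep using Lemmas \ref{coreq_sep} and \ref{lemaequations_sep} in place of Corollary \ref{coreq} and Lemmas \ref{lemaequations}, \ref{lemaequationsxtilde} is precisely that analogy. One small bookkeeping slip to fix when you write it up: in the $(b-1)$-st column the equation for $i=a-1$ has $Y_{a-1,b}=0$ on the right (not $Y_{a,b}$), so it is the \emph{bottom} equation $Y_{a-1,b-1}+CY_{a,b-1}=Y_{a,b-1}C+Y_{a,b}$ (with the $Y_{a,b}$ term present) where Lemma \ref{lemaequations_sep} yields $Y_{a-1,b-1}=Y_{a,b}$ and $Y_{a,b-1}\in Z(C)$.
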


\begin{lemma}\label{thecentG1G2row_sep}
Let $G_1, G_2$ be two generalized Jordan blocks of the first kind, $G_1\in M_{sa}(\FF), G_2\in M_{sb}(\FF)$, $a\leq b$.
Let $T=\left[T_{i,j}\right]\in M_{sa\times sb}(\mathbb{F})$ be a block matrix with $T_{i,j}\in M_{s}(\FF)$ such that
\begin{equation*} \label{conmG1G2_2_sep}
G_1T=TG_2.
\end{equation*}
Then,
 \begin{equation*} 
 T=\begin{bmatrix}
 T_1 & 0
\end{bmatrix},
\end{equation*}
with $T_1\in Z(G_{1})$.
\end{lemma}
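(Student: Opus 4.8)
The statement is the transpose–analogue of Lemma~\ref{thecentG1G2col_sep} (equivalently, the first–kind analogue of Lemma~\ref{thecentG1G2row}), so the plan is to mimic the proof of Lemma~\ref{thecentG1G2col}, with the matrices $E$ replaced by identity blocks $I$ and with the technical input supplied by the separable versions Lemma~\ref{coreq_sep} and Lemma~\ref{lemaequations_sep}. First I would write $T=[Y_{i,j}]$ with $i=1,\dots,a$, $j=1,\dots,b$ and $Y_{i,j}\in M_s(\FF)$, and expand $G_1T=TG_2$ blockwise. Because the subdiagonals of $G_1$ and $G_2$ carry identity blocks, this turns the single matrix equation into the block system
\[
Y_{i-1,j}+CY_{i,j}=Y_{i,j}C+Y_{i,j+1},\qquad i=1,\dots,a,\ j=1,\dots,b,
\]
with the conventions $Y_{0,j}=0$ and $Y_{i,b+1}=0$.

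Next I would resolve this system row by row (top to bottom), and inside each row by reading the block equations from $j=b$ down to $j=1$. For the first row $i=1$ the equation at $j=b$ is $CY_{1,b}=Y_{1,b}C$, so $Y_{1,b}\in Z(C)$; feeding this into the equation at $j=b-1$ and iterating Lemma~\ref{coreq_sep} yields $Y_{1,b}=Y_{1,b-1}=\dots=Y_{1,2}=0$ and $Y_{1,1}\in Z(C)$. For the inductive step I assume row $i-1$ has the shape $Y_{i-1,j}=0$ for $j\ge i$ and $Y_{i-1,j}=Y_{i-j,1}\in Z(C)$ for $j<i$, and I process row $i$ downward in $j$. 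As long as $j\ge i$ the source term $Y_{i-1,j}$ vanishes, so Lemma~\ref{coreq_sep} forces $Y_{i,j}=0$ for $j=i+1,\dots,b$ and leaves $Y_{i,i}\in Z(C)$; once $j\le i-1$ the source term $Y_{i-1,j}$ lies in $Z(C)$, so Lemma~\ref{lemaequations_sep} applies and gives the Toeplitz relations $Y_{i,j}=Y_{i-1,j-1}$ for $2\le j\le i$ (in particular $Y_{i,i}=\dots=Y_{1,1}$) together with $Y_{i,1}\in Z(C)$. This is exactly the shape required for row $i$, closing the induction.

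Finally, after processing row $i=a$ we have $Y_{i,j}=0$ whenever $j>i$, hence $Y_{i,j}=0$ for all $j>a$, so $T=[\,T_1\ \ 0\,]$ with $T_1=[Y_{i,j}]_{1\le i,j\le a}$ a block lower triangular Toeplitz matrix all of whose (sub)diagonal blocks lie in $Z(C)$. By Theorem~\ref{corocentraGJ_sep} this means $T_1\in Z(G_1)$, which is the assertion.

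I do not expect a genuine obstacle here: the substance is entirely in the previously established lemmas, and in particular separability of $p$ is used only through Lemma~\ref{coreq_sep} (invertibility of $p'(C)$). The one point needing care is the bookkeeping of the double indexing: the order of elimination must be rows from top to bottom and, within each row, block-columns from $j=b$ down to $j=1$, so that at every step one knows which $Y_{i,j}$ is already zero (invoke Lemma~\ref{coreq_sep}) and which already lies in $Z(C)$ (invoke Lemma~\ref{lemaequations_sep}). The hypothesis $a\le b$ enters precisely to guarantee $b\ge i$ for every row index $i$, so that inside each row the downward sweep actually reaches the ``diagonal'' column $j=i$ before the source term becomes a nonzero element of $Z(C)$.
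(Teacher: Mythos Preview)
Your proposal is correct and follows exactly the approach the paper indicates: the paper states only that the proof is analogous to that of Lemmas~\ref{thecentG1G2col} and~\ref{thecentG1G2row}, and your row-by-row sweep (using Lemma~\ref{coreq_sep} while the source term vanishes and Lemma~\ref{lemaequations_sep} once it lies in $Z(C)$) is precisely that analogy carried out in detail. The bookkeeping and the role of the hypothesis $a\le b$ are handled correctly.
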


\begin{theorem}\label{the:centGJ_sep}
Let $G=\diag(G_1, \ldots, G_m)$ be a generalized Jordan matrix of the first kind. 
Let $p_{G}=p^{r}$ be its characteristic polynomial, $p$ be irreducible and separable,
$m_G=p^{r_1}, \ r_1\leq r$ and $\deg{p}=s$. Let $\alpha=(\alpha_1, \ldots, \alpha_m)$ be the generalized Segre characteristic 
of $G$. If $X\in Z(G)$, then
$$X=[X_{i,j}]_{i,j=1, \ldots, m},$$
where $X_{i,j}\in M_{s\alpha_{i}\times s\alpha_{j}}(\mathbb{F})$  are block lower triangular Toeplitz matrices of the form:
\begin{enumerate}
 \item[1)] If $\alpha_{i}=\alpha_{j}$, then $X_{i,i}\in Z(G_{i})$.
\item[2)] If $\alpha_{i}<\alpha_{j}$, then
\[X_{i,j}=
\left[\begin{array}{cc}
X_{i,i} & 0
\end{array}\right],\]
with $X_{i,i}\in Z(G_{i})$.
\item[3)] If $\alpha_{i}>\alpha_{j}$, then
\[X_{i,j}=
\left[\begin{array}{c}
0 \\
X_{j,j}
\end{array}\right],\]
with $X_{j,j}\in Z(G_{j})$.
\end{enumerate}
\end{theorem}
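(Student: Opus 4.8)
The plan is to mirror exactly the argument used for the non-separable case in Theorem~\ref{the:centGJ}. Let $X\in Z(G)$ and write $X=[X_{i,j}]_{i,j=1,\ldots,m}$ with $X_{i,j}\in M_{s\alpha_i\times s\alpha_j}(\FF)$. Since $G=\diag(G_1,\ldots,G_m)$ is block diagonal, the equation $GX=XG$ decouples into the $m^2$ equations $G_iX_{i,j}=X_{i,j}G_j$ for $i,j=1,\ldots,m$, where $G_1,\ldots,G_m$ are now generalized Jordan blocks of the first kind. So it suffices to analyze each block $X_{i,j}$ separately, and the three cases according to the comparison of $\alpha_i$ and $\alpha_j$ are handled by three already-proven facts.

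For the diagonal-type case $\alpha_i=\alpha_j$, the equation $G_iX_{i,i}=X_{i,i}G_i$ says precisely that $X_{i,i}\in Z(G_i)$, and Theorem~\ref{corocentraGJ_sep} describes this centralizer as block lower triangular Toeplitz with all blocks in $Z(C)$; this gives statement (1). For the case $\alpha_i<\alpha_j$ (a "wide" block), apply Lemma~\ref{thecentG1G2row_sep} with the roles set so that $G_1=G_i$, $G_2=G_j$, $a=\alpha_i\le b=\alpha_j$: it yields $X_{i,j}=[\,X_{i,i}\ \ 0\,]$ with $X_{i,i}\in Z(G_i)$, which is statement (2). Symmetrically, for $\alpha_i>\alpha_j$ (a "tall" block) apply Lemma~\ref{thecentG1G2col_sep} with $G_1=G_i$, $G_2=G_j$, $a=\alpha_i\ge b=\alpha_j$, obtaining $X_{i,j}=\bigl[\begin{smallmatrix}0\\ X_{j,j}\end{smallmatrix}\bigr]$ with $X_{j,j}\in Z(G_j)$, which is statement (3). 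The block lower triangular Toeplitz shape asserted for every $X_{i,j}$ then follows because each of these three descriptions exhibits $X_{i,j}$ as built from a $Z(G_k)$-matrix (for the smaller of the two indices' sizes), which is itself of that Toeplitz form by Theorem~\ref{corocentraGJ_sep}, padded with a zero block.

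There is essentially no obstacle here: the entire content has been pushed into Theorem~\ref{corocentraGJ_sep} and Lemmas~\ref{thecentG1G2col_sep} and~\ref{thecentG1G2row_sep}, whose proofs in turn reduce to Lemmas~\ref{lemaequations_sep} and~\ref{coreq_sep} exactly as in the non-separable development. The only point worth a sentence of care is the bookkeeping when $\alpha_i=\alpha_j$ but the two blocks are nonetheless distinct summands: the off-diagonal-in-$X$ block between two equal-size generalized Jordan blocks still satisfies $G_iX_{i,j}=X_{i,j}G_i$, so it lies in $Z(G_i)\cong Z(G_j)$, consistent with case~(1) read with $i\neq j$. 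Thus the proof is a three-line invocation of the cited results applied blockwise.

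\begin{proof}
Let $X\in Z(G)$ and write $X=[X_{i,j}]_{i,j=1,\ldots,m}$ with $X_{i,j}\in M_{s\alpha_i\times s\alpha_j}(\FF)$. Since $G=\diag(G_1,\ldots,G_m)$, the identity $GX=XG$ is equivalent to
$$G_iX_{i,j}=X_{i,j}G_j,\quad i,j=1,\ldots,m,$$
where each $G_k$ is a generalized Jordan block of the first kind. We distinguish three cases.

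If $\alpha_i=\alpha_j$, the equation $G_iX_{i,j}=X_{i,j}G_j$ together with Theorem~\ref{corocentraGJ_sep} (applied with $\ell=\alpha_i=\alpha_j$) shows that $X_{i,j}$ is a block lower triangular Toeplitz matrix with all its blocks in $Z(C)$; in particular, for $i=j$ we get $X_{i,i}\in Z(G_i)$, which is statement (1).

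If $\alpha_i<\alpha_j$, apply Lemma~\ref{thecentG1G2row_sep} with $G_1=G_i$, $G_2=G_j$, $a=\alpha_i\le b=\alpha_j$, to obtain
$$X_{i,j}=\left[\begin{array}{cc} X_{i,i} & 0 \end{array}\right],\qquad X_{i,i}\in Z(G_i),$$
which is statement (2).

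If $\alpha_i>\alpha_j$, apply Lemma~\ref{thecentG1G2col_sep} with $G_1=G_i$, $G_2=G_j$, $a=\alpha_i\ge b=\alpha_j$, to obtain
$$X_{i,j}=\left[\begin{array}{c} 0 \\ X_{j,j} \end{array}\right],\qquad X_{j,j}\in Z(G_j),$$
which is statement (3).

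In all three cases $X_{i,j}$ is built from a matrix in $Z(G_k)$ for the index $k$ corresponding to the smaller of $\alpha_i,\alpha_j$, and such a matrix is block lower triangular Toeplitz by Theorem~\ref{corocentraGJ_sep}; padding with a zero block preserves this form. Hence every $X_{i,j}$ is a block lower triangular Toeplitz matrix of the stated shape.
\end{proof}
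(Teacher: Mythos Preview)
Your proof is correct and follows exactly the same approach as the paper: the paper's own proof is the single sentence ``The result follows straightforward from Theorem~\ref{corocentraGJ_sep} and Lemmas~\ref{thecentG1G2col_sep} and~\ref{thecentG1G2row_sep},'' and you have simply unpacked this by writing out the block decomposition $G_iX_{i,j}=X_{i,j}G_j$ and invoking the three cited results in the three cases. Your extra remark about the $\alpha_i=\alpha_j$, $i\neq j$ case is a helpful clarification of the paper's notation but adds no new mathematical content.
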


\begin{proof}
The result follows straightforward from Theorem~\ref{corocentraGJ_sep} and 
Lemmas~\ref{thecentG1G2col_sep} and \ref{thecentG1G2row_sep}.
\end{proof}


\section{Centralizer of a generalized Weyr form}\label{sec:centraWeyr}

In this section we compute the centralizer of a generalized Weyr matrix. Let $W$ be a matrix of the form

\begin{equation*}
W=\left[\begin{array}{cccccc}
\vspace{-0.15cm}
W_1&E_2& \ldots & 0 &0\\
0&W_2&\ddots& 0 &0\\
\vdots & \vdots & \ddots & \ddots & \vdots \\
0 & 0 & \ldots &W_{\alpha_{1}-1}&E_{\alpha_{1}}\\
0 & 0 & \ldots & 0 &W_{\alpha_{1}}
\end{array}\right],\end{equation*}
with $W_{i}$ and $E_{i}$ as in~(\ref{weyrform}).

\begin{theorem}\label{weyrformcentra}
Let $W$ be the generalized Weyr matrix with generalized Segre characteristic $\alpha=(\alpha_1, \ldots, \alpha_m)$. Let
$\tau=(\tau_1, \ldots, \tau_{\alpha_{1}})$ be the conjugate partition of $\alpha$. Then, if $K\in Z(W)$,

\begin{equation}\label{centralitzadorK} K=\left[\begin{array}{cccccc}
K_{1,1} & K_{1,2}  & \ldots& K_{1,\alpha_{1}-1}&K_{1,\alpha_{1}}  \\
0 & K_{2,2} &  \ldots & K_{2,\alpha_{1}-1} & K_{2,\alpha_{1}} \\		
 \vdots &\vdots&\ddots& \vdots   & \vdots  \\
0 & 0  & \ldots & K_{\alpha_{1}-1,\alpha_{1}-1} & K_{\alpha-{1}-1,\alpha_{1}}\\
0 & 0&\ldots & 0 & K_{\alpha_{1},\alpha_{1}}
\end{array}\right],\end{equation}
built according to the following recursive construction

\begin{enumerate}
\item
$K_{\alpha_{1},\alpha_{1}}$ is a block matrix of $\tau_{\alpha_{1}}\times  \tau_{\alpha_{1}}$ independent blocks of $Z(C)$.

\item The  blocks on the main diagonal for $i=1, \ldots, \alpha_{1}-1$ are of the form
  $$K_{i,i}= \left[\begin{array}{cc}
K_{i+1,i+1} & Y_{i,i}\\
0 & X_{i,i}
\end{array}\right],$$
where $X_{i,i}$ is composed by  
 $(\tau_{i}-\tau_{i+1})\times (\tau_{i}-\tau_{i+1})$ independent blocks of $Z(C)$ and $Y_{i,i}$ is composed  
by $\tau_{i+1}\times (\tau_{i}-\tau_{i+1})$ independent blocks of $Z(C)$.

\medskip

\item The  blocks on the last column for $i=1, \ldots, \alpha_{1}-1$ are of the form
 $$K_{i,\alpha_{1}}= \left[\begin{array}{c}
Y_{i,\alpha_{1}}\\
 X_{i,\alpha_{1}}
\end{array}\right],$$
where $X_{i,\alpha_{1}}$ is a block matrix of $(\tau_{i}-\tau_{i+1})\times \tau_{\alpha_{1}}$ independent blocks of $Z(C)$ and 
$Y_{i,\alpha_{1}}=Y_{i,\alpha_{1}}'  +\left(\begin {array}{c} \tilde{Y}_{i+1,\alpha_{1}} \\
\tilde{X}_{i+1,\alpha_{1}}\end{array} \right) $  
where $Y_{i,\alpha_{1}}' $ is composed by $\tau_{i+1}\times \tau_{\alpha_{1}}$ independent blocks of $Z(C)$ and 
$\tilde{Y}_{i+1,\alpha_{1}}$, $\tilde{X}_{i+1,\alpha_{1}}$ are composed by blocks defined as in (\ref{deftildeX}).
Notice that $Y_{\alpha_{1},\alpha_{1}}=K_{\alpha_{1},\alpha_{1}}$.

\item For $i, j=1, \ldots, s-1,\quad i\leq j, $
$$K_{i,j}= \left[\begin{array}{cc}
K_{i+1,j+1} & Y_{i,j}\\
0 & X_{i,j}
\end{array}\right],$$
where $X_{i,j}$ is a block matrix  of $(\tau_{i}-\tau_{i+1})\times  (\tau_{j}-\tau_{j+1})$ independent blocks of $Z(C)$,
and $Y_{i,j}=Y_{i, j}' + \left(\begin {array}{c}\tilde{Y}_{i+1,j}\\
 \tilde{X}_{i+1,j}\end{array} \right) $ where $Y_{i,j}'$  is composed by $\tau_{i+1}\times  (\tau_{j}-\tau_{j+1})$ independent blocks 
 of $Z(C)$ and $\tilde{Y}_{i+1,j}, \tilde{X}_{i+1,j} $ are composed by blocks defined as in (\ref{deftildeX}).

\end{enumerate}
\end{theorem}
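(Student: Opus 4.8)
The plan is to transfer the problem back to the generalized Jordan form, where Theorem~\ref{the:centGJ} already gives the full description of the centralizer, and then push that description through the similarity $W = P^{-1}GP$ of Theorem~\ref{theo:forma de Weyr}. Concretely, $K \in Z(W)$ if and only if $PKP^{-1} \in Z(G)$, so every $K$ in the centralizer is $P^{-1}XP$ for a unique $X \in Z(G)$ of the block lower-triangular Toeplitz shape given in Theorem~\ref{the:centGJ}. The whole content of the statement is therefore a matter of tracking how the permutation $P$ of~(\ref{permutationmatrix}) reshuffles the blocks of $X$: the columns of $X$ indexed by the partial chains $v^{(i)}_k$ get regrouped by the ``level'' $k$ (how deep the partial chain sits in its generalized Jordan chain) rather than by the chain index $i$. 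Since the Weyr block $W_k$ corresponds to the collection of all $k$-th partial chains, the $(i,j)$ block $K_{i,j}$ of $K$ records exactly the interactions between level-$i$ partial chains and level-$j$ partial chains, and upper-triangularity of $K$ (the first assertion, display~(\ref{centralitzadorK})) is immediate: in $X$, a partial chain at level $i$ can only be mapped by an element of $Z(G_p)$ into partial chains at levels $\geq i$ within each generalized Jordan block, because of the lower-triangular Toeplitz structure together with the vanishing patterns in cases 2) and 3) of Theorem~\ref{the:centGJ}.

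Next I would set up the recursion. Order the levels from $\alpha_1$ (deepest, shortest partial-chain count $\tau_{\alpha_1}$) down to $1$. The key structural fact is that the sub-matrix of $W$ obtained by deleting the first block row and column is again a generalized Weyr matrix, namely the one attached to the partition $(\tau_2, \ldots, \tau_{\alpha_1})$, which is the conjugate of the ``shifted'' Segre characteristic; equivalently, it is the restriction of $W$ to the image of $p(W)$. Thus $K_{\alpha_1,\alpha_1}$, being the centralizer piece of $W_{\alpha_1} = \diag(C, \ldots, C)$ ($\tau_{\alpha_1}$ copies) with no higher levels to constrain it, is an arbitrary $\tau_{\alpha_1}\times\tau_{\alpha_1}$ array of blocks of $Z(C)$ — this is item (1), and it follows from Lemma~\ref{lemmacentracom} applied blockwise since $W_{\alpha_1}$ commutes with $K_{\alpha_1,\alpha_1}$ and the only coupling equation $E_{\alpha_1}K_{\alpha_1,\alpha_1} = K_{\alpha_1-1,\alpha_1-1}E_{\alpha_1}+\cdots$ is handled at the next stage. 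Items (2), (3), (4) then come from reading off, level by level, the block equations $WK = KW$. The equation in block position $(i,i)$ forces the ``top-left $\tau_{i+1}\times\tau_{i+1}$ corner'' of $K_{i,i}$ to equal $K_{i+1,i+1}$ (this is the role of $E_i$, which embeds the level-$(i+1)$ space into the level-$i$ space in the standard way), leaving $X_{i,i}$ and $Y_{i,i}$ free; the equations in positions $(i,\alpha_1)$ and $(i,j)$ produce the Toeplitz/$\tilde{\cdot}$ corrections exactly as Lemma~\ref{lemaequationsxtilde} and Lemma~\ref{lemaequations} produced them for a single generalized Jordan block in the proof of Theorem~\ref{corocentraGJ}. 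In fact, after the reordering, each block column of $W$ looks locally like a $G_i$-type block acted on by $C$-blocks and $E$-blocks, so the technical Lemmas~\ref{lemaequationsxtilde}, \ref{lemaequations} and Corollary~\ref{coreq} apply essentially verbatim.

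I would therefore structure the proof as: (a) invoke $K \in Z(W) \iff PKP^{-1}\in Z(G)$ and Theorem~\ref{the:centGJ} to get the general form of $X = PKP^{-1}$; (b) compute the block structure of $P^{-1}XP$ by carefully indexing which blocks of $X$ land where, obtaining the upper block-triangular array~(\ref{centralitzadorK}) and the fact that each $K_{i,j}$ is block Toeplitz on the relevant sub-pattern; (c) alternatively — and this is probably cleaner to write — prove the recursion intrinsically by induction on $\alpha_1$, peeling off the first Weyr block row/column, using that the complementary block is the generalized Weyr form of a smaller partition, and solving the coupling equations with the technical lemmas of Section~\ref{gJm}. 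The base case $\alpha_1 = 1$ is just $Z(\diag(C,\ldots,C))$, a full array of $Z(C)$-blocks.

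The main obstacle I anticipate is purely bookkeeping: matching the indices $\sigma_i$, $\mu_k$, $\beta_k$, $n_k$ appearing in the permutation~(\ref{permutationmatrix}) against the block positions in $K$, and verifying that the three ``shapes'' of off-diagonal blocks in Theorem~\ref{the:centGJ} (square, ``$[\,X_{i,i}\ 0\,]$'', ``$\left[\begin{smallmatrix}0\\X_{j,j}\end{smallmatrix}\right]$'') reassemble under $P$ into precisely the nested block form with the $\left(\begin{smallmatrix}\tilde Y\\ \tilde X\end{smallmatrix}\right)$ corrections described in items (2)–(4). There is also a small subtlety in item (4): the range should be $i,j = 1, \ldots, \alpha_1 - 1$ with $i \leq j$ (the ``$s$'' there appears to be a typo for $\alpha_1$), and one must check the corrections $\tilde Y_{i+1,j}, \tilde X_{i+1,j}$ are consistent with the nesting $K_{i,j} = \left[\begin{smallmatrix}K_{i+1,j+1}&Y_{i,j}\\0&X_{i,j}\end{smallmatrix}\right]$ down to the diagonal; this consistency is exactly what the inductive application of Lemma~\ref{lemaequations} guarantees, mirroring the last paragraph of the proof of Theorem~\ref{corocentraGJ}. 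Once the indexing dictionary between $G$-blocks and $W$-blocks is fixed, no new ideas beyond Section~\ref{gJm} are needed.
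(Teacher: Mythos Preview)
Your approach is essentially the same as the paper's: the paper's entire proof is the single observation that if $P$ is the permutation matrix of~(\ref{permutationmatrix}) and $X\in Z(G)$, then $K=P^{-1}XP$, i.e.\ exactly your step (a)/(b). The paper leaves all the block-reindexing bookkeeping implicit, whereas you spell out the mechanism and also sketch an alternative intrinsic induction (your route (c)); both are sound, but the published argument is just the conjugation-by-$P$ transfer from Theorem~\ref{the:centGJ}.
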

\begin{proof}
Let $G$ be a matrix similar to $W$ in generalized Jordan form. If $P$ is the matrix described in~(\ref{permutationmatrix}) and 
$X\in Z(G)$, then $P^{-1}XP=K$.
\end{proof}

\begin{remark}
 If $p$ is separable, the block structure of $K\in Z(W)$ is the same as (\ref{centralitzadorK}) but every block component is in $Z(C)$. 
\end{remark}

\begin{example}\label{exem:centraWeyr}

Following with Example~\ref{exem:centraJordan}, the  Weyr characteristic of $G$ is
$\tau=(5,3,3,2,1).$ This partition $\tau$ gives us the number of blocks of the diagonal blocks in the generalized Weyr form.

The generalized Weyr form is
\begin{equation*}
W=
\begin{matriz}{ccccc|ccc|ccc|cc|c}
 C & & & & & E & & & & & & & &  \\
 & C & & & & & E & & & & & & &  \\
 & & C & & & & & E& & & & & & \\
 & & &C & & & & & & & & & & \\
 & & & &C & & & & & & & && \\
\hline
 & & & & & C & & & E & & & & &  \\
 & & & & & & C & & & E & & & &  \\
 & & & & & & & C & & & E & & &  \\
\hline
 & & & & & & & & C & & & E & &\\
 & & & & & & & & & C & & & E & \\
 & & & & & & & & & & C & & & \\
\hline
 & & & & & & & & & & & C & & E\\
 & & & & & & & & & & & &C & \\
 \hline
 & & & & & & & & & & & & & C
\end{matriz},
\end{equation*}
and a matrix $K\in Z(W)$ has the form

$$K= \begin{matriz}{ccccc|ccc|ccc|cc|c}
A_{1} & I_{1} &L_{1} &Q_{1} &W_{1} &A_{2} &I_{2} &L_{2} &A_{3} &I_{3} &L_{3} &A_{4} &I_{4} & A_{5} \\
& B_{1} & M_{1}& R_{1}& X_{1}& H_{1}& B_{2}& M_{2}& H_{2}& B_{3}& M_{3}& H_{3}& B_{4}& H_{4} \\
&  & C_{1}& S_{1}& Y_{1}& & K_{1}& C_{2}& J_{1}& K_{2}& C_{3}& J_{2}&K_{3} & J_{3} \\
 &  & & D_{1} &G_{1} & & & & & & P_{1}& & O_{1}& N_{1} \\
 &  & & F_{1} & E_{1} & & & & & & V_{1}& &U_{1} &T_{1}  \\
 \hline
 &  & & & & A_{1}& I_{1}& L_{1}& A_{2}& I_{2}& L_{2}& A_{3}& I_{3}& A_{4} \\
 &  & & & & & B_{1}&M_{1} & H_{1}& B_{2}& M_{2}& H_{2}& B_{3}& H_{3}\\
 &  & & & & & & C_{1}& &K_{1} & C_{2}& J_{1}& K_{2}& J_{2} \\
 \hline
 &  & & & & & & & A_{1}& I_{1}&L_{1} & A_{2}&I_{2} & A_{3} \\
&  & & & & & & & & B_{1}& M_{1}& H_{1}& B_{2}& H_{2} \\
&  & & & & & & & & & C_{1}& & K_{1}& J_{1} \\
\hline
&  & & & & & & & & & &A_{1} & I_{1}& A_{2}\\
&  & & & & & & & & & & &B_{1} & H_{1}\\
\hline
 &  & & & & & & & & & & & & A_{1}
    \end{matriz},$$
where the blocks in this matrix satify the same relations as in Example~\ref{exem:centraJordan}.
\end{example}

\subsection{Determinant of the centralizer}\label{sec:Deter}


We proved in Theorem \ref{weyrformcentra} that the elements of the centralizer of the generalized Weyr canonical form are block upper  triangular matrices.
As a consequence, the  determinant of $K\in Z(W)$ can be computed as the product of the determinants of the diagonal blocks (see~\cite{Det13} for the centralizer of a Weyr form)
 \begin{equation*}
\det(K)= \det(K_{1,1})\det(K_{2,2})\ldots \det(K_{r,r}).
\end{equation*}
Hence, the diagonal blocks are key to characterize the automorphisms of the centralizer. An important 
application of this property will be the characterization of hyperinvariant and characteristic lattices of 
the endomorphism (see \cite{MMP13} for the case when $p$ splits over $\mathbb{F}$).   

In Example \ref{exem:centraJordan} (also Example \ref{exem:centraWeyr}), the expression of the determinant for those elements 
of the centralizer results in
$$
\det(K)= \det(X)=\det(A_{1})^5 \det(B_{1})^4  \det(C_{1})^3
 \det\left[\begin{matrix} D_{1}&G_{1}\\
                                                     F_{1}&E_{1} \end{matrix} \right].    
$$

\begin{remark}
Notice that the formula is exactly the same in the separable and nonseparable cases, as the block components of the  diagonal  blocks of a matrix $K\in Z(W)$ are elements in $Z(C)$. Therefore, 
the condition for a matrix in the centralizer to be an automorphism is exactly the same in both cases.
\end{remark}

\subsection{Dimension of the centralizer}
As a consequence of Corollary~\ref{coro:dimGk}, to compute the dimension of the centralizer we must take into account that each block has dimension equal to $s$. Then, according to the Segre and Weyr characteristics we have that 
$$\dim (Z(G))= s(\alpha_{1}+\ldots +(2m-1)\alpha_{m})=$$
$$=\dim (Z(W))= s(\tau_{1}^{2}+\ldots+\tau_{r}^{2}).$$
Notice that when $\deg (p)=s=1$ the result  matches the Frobenius formula of the dimension of the centralizer of a Jordan and Weyr form (\cite{Omeara}).

\section*{Acknowledgements}
The second author is partially supported by grant MTM2015-65361-P
MINECO/FEDER, UE. The third author is partially supported by grant MTM2017-83624-P MINECO.

\medskip

\bibliographystyle{model1a-num-names}

\end{document}